\documentclass[12pt]{amsart}
\usepackage{a4wide}
\usepackage[utf8]{inputenc}
\usepackage{amsmath}
\usepackage{amsthm}
\usepackage{amssymb}
\usepackage{amsfonts}
\usepackage{amsopn}
\usepackage{graphicx}
\usepackage{enumerate}
\usepackage{color}
\usepackage{mathtools}
\usepackage{mathrsfs}
\usepackage{bbm}
\usepackage{yhmath}
\usepackage{cite}
\usepackage[colorlinks,linkcolor=blue,anchorcolor=blue,citecolor=blue]{hyperref}

\setlength{\parskip}{0.3\baselineskip}

\newtheorem{theorem}{Theorem}[section]

\newtheorem{lemma}[theorem]{Lemma}
\newtheorem{definition}[theorem]{Definition}
\newtheorem{corollary}[theorem]{Corollary}

\theoremstyle{definition}
\newtheorem{example}[theorem]{Example}

\newtheorem*{conjecture}{Conjecture} 

\newtheorem*{remark}{Remark}

\numberwithin{equation}{section}

\newcommand{\mbb}{\mathbb}

\newcommand{\mbf}{\mathbf}
\newcommand{\mcal}{\mathcal}
\newcommand{\mscr}{\mathscr}
\newcommand{\set}[1]{\left\{ #1 \right\}}

\newcommand{\R}{\mathbb{R}}

\newcommand{\Z}{\mathbb{Z}}
\newcommand{\N}{\mathbb{N}}

\newcommand{\f}{\infty}

\newcommand{\wh}[1]{\widehat{#1}}

\newcommand{\sse}{\subseteq}

\newcommand{\D}{\;\mathrm{d}}

\newcommand{\blue}[1]{{\color{blue}#1}}

\title[Existence, equivalence and spectrality of infinite convolutions in $\R^d$  ]{Existence, equivalence and spectrality of infinite convolutions in $\R^d$  }

\author[J. J. Miao]{Jun Jie Miao}
\address[J. J. Miao]{School of Mathematical Sciences,  Key Laboratory of MEA (Ministry of Education) \& Shanghai Key Laboratory of PMMP,  East China Normal University, Shanghai 200241, China}
\email{jjmiao@math.ecnu.edu.cn}

\author[H. Zhao]{Hongbo Zhao}
\address[H. Zhao]{School of Mathematical Sciences, Shanghai Key Laboratory of PMMP, East China Normal University, Shanghai 200241,
	People's Republic of China}
\email{2504245357@qq.com}

\subjclass[2010]{28A80, 42C30, 60B10}

\begin{document}
\maketitle

\begin{abstract}	
In this paper, we study existence, equivalence and spectrality of infinite convolutions  which may not be compactly supported in $d$-dimensional Euclidean space by manipulating various techniques in probability theory. First, we define the equivalent sequences, and we prove that the infinite convolutions converges simultaneously if they are generated by equivalent sequences. Moreover, the equi-positivity keeps unchanged for  infinite convolutions generated by equivalent sequences. Next, we study the spectrality of  infinite convolutions generated by admissible pairs, and we show such infinite convolutions have the same spectrum if they are generated by the equivalent sequences. Finally, we  provide some  sufficient conditions for the existence and spectral properties of infinite convolutions in higher dimensions.
\end{abstract}

\section{Introduction}

\subsection{Infinite convolutions and non-autonomous affine measures}

For a finite subset $A \sse \R^d$, the uniform discrete measure supported on $A$ is given by
\begin{equation*}
	\delta_A = \frac{1}{\# A} \sum_{a \in A} \delta_a,
\end{equation*}
where  $\#$ denotes the cardinality of a set and $\delta_a$ denotes the Dirac measure at the point $a$. Let $\{ A_k\}_{k=1}^\f$ be a sequence of finite subsets of $\R^d$ such that $\# A_k \ge 2$ for every $k \ge 1$.
For  each integer $k \geq 1$, we define
\begin{equation}\label{discrete-convolution}
	\nu_k =\delta_{A_1}*\delta_{A_2} * \cdots *\delta_{A_k},
\end{equation}
where $*$ denotes the convolution of measures.
If the sequence of convolutions $\{\nu_k\}_{k=1}^\f$ converges weakly to a Borel probability measure $\nu$, then we call $\nu$ the \emph{infinite convolution} of $\{{A_k}\}_{k=1}^\infty$, denoted by
\begin{equation}\label{def_ica}
	\nu =\delta_{A_1}*\delta_{A_2} * \cdots *\delta_{A_k} *\cdots.
\end{equation}

There is a  famous example of infinite convolutions
$$
\nu_\lambda = \delta_{\{\pm \lambda\}} * \delta_{\{\pm \lambda^2\}} * \cdots *\delta_{\{\pm \lambda^k\}}*\cdots, \qquad \lambda\in(0,1),
$$
 called \textit{infinite Bernoulli convolution}, that is, $A_k=\{-\lambda^k,\lambda^k\}$ in \eqref{discrete-convolution}.
The measure $\nu_\lambda$ may be also seen as the distribution of $\sum _{k=1}^\infty \pm \lambda^k$ where the signs are  independently selected with probability $\frac{1}{2}$. Infinite Bernoulli convolutions have been studied  since 1930's for the fundamental question to determine the absolute continuity and singularity of $\nu_\lambda$ for $\lambda \in (\frac{1}{2}, 1)$. Moreover,  the smoothness of the density is explored if $\nu_\lambda$ is absolutely continuous, and the dimension of the measure is investigated  if $\nu_\lambda$ is singular with respect to the Lebesgue measure.
These studies reveal connections with  harmonic analysis, the theory of algebraic numbers, dynamical systems, and fractal geometry \cite{Chan-Ngai-Teplyaev-2015,Hu-Lau-2019,Shmerkin-2014,Solomyak1995}, we refer readers to the  excellent survey paper \cite{Peres-Schlag-Solomyak-2000} for this topic.

It is clear that the uniformly distributed self-affine measures and non-autonomous affine measures may be regarded as special cases of infinite convolutions; see \cite{Falco03,GM22,GM}. Given a sequence $\{(R_k,B_k)\}_{k=1}^\infty $where $R_k$ is a $d \times d$ expansive matrix (all eigenvalues have modulus strictly greater than $1$) with integer entries and $B_k\sse \Z^d$ is a finite subset of integer vectors with $\# B_k\ge 2$ for each integer $k\geq 1$. We write
\begin{equation}\label{def_mun}
	\mu_k =\delta_{{R_1}^{-1}B_1}\ast\delta_{(R_2R_1)^{-1}B_2}\ast\dots\ast\delta_{(R_kR_{k-1}\cdots R_1)^{-1}B_k}.
\end{equation}
If the sequence $\{\mu_k\}_{k=1}^\infty$ converges weakly to a Borel probability measure $\mu$, then we call $\mu$ the \emph{infinite convolution} of $\{(R_k,B_k)\}_{k=1}^\infty,$ denoted by
\begin{equation}\label{infinite-convolution}
	\mu =\delta_{{R_1}^{-1}B_1}\ast\delta_{(R_2R_1)^{-1}B_2}\ast\dots\ast\delta_{(R_kR_{k-1}\cdots R_1)^{-1}B_k} *\cdots.
\end{equation}
For each integer $k\geq 1$, we write
\begin{equation}\label{def_mugn}
	\mu_{>k}=\delta_{({R_{k+1}\cdots R_1})^{-1}B_{k+1}}\ast\delta_{(R_{k+2}\cdots R_1)^{-1}B_{k+2}}\ast\cdots,
\end{equation}
and it is clear that $\mu = \mu_k * \mu_{>k}$. Let
\begin{equation}\label{def_nu_n}
	\nu_{>k} = \mu_{>k}\circ(R_kR_{k-1}\cdots R_1)^{-1}=\delta_{R_{k+1}^{-1} B_{k+1}} * \delta_{(R_{k+2} R_{k+1})^{-1} B_{k+2}} * \cdots,
\end{equation}
which is crucial to investigate  the spectrality of infinite convolutions (see Theorem \ref{thm_equi_spectral}).

Note that if all $(R_k,B_k)$ are identical, the corresponding  infinite convolution is just a self-similar measure or a self-affine measure. For the non-autonomous affine measures, it usually requires  the {\it uniform contractive condition}, that is,
$$
\sup_{k} \Vert R_k^{-1}\Vert<1,
$$
where $\Vert\cdot\Vert$ is the spectral norm, otherwise the corresponding non-autonomous affine sets may have interior points; see \cite{Falco03,GM22,GM} for details.

\subsection{Spectral measures and fractals}
A Borel probability measure $\mu$ on $\R^d$ is called a \emph{spectral measure} if there exists a countable subset $\Lambda \sse \R^d$ such that the family of exponential functions
$$
\set{e_\lambda(x) = e^{-2\pi i \lambda\cdot x}: \lambda \in \Lambda}
$$
forms an orthonormal basis in $L^2(\mu)$ where $\lambda\cdot x$ is the standard inner product in $\R^d$, and we call the set $\Lambda$   a \emph{spectrum} of $\mu$ and call $(\mu, \Lambda)$ a {\em spectral pair}.
The existence of spectrum of measures is a fundamental question in harmonic analysis, which was first studied by Fuglede~\cite{Fuglede-1974} for the normalized Lebesgue measure on measurable sets.
In the paper, Fuglede proposed the following  conjecture.
\begin{conjecture}
	\emph{A measurable set $\Gamma \sse \R^d$ with positive finite Lebesgue measure is a spectral set, that is, the normalized Lebesgue measure on $\Gamma$ is a spectral measure, if and only if $\Gamma$ tiles $\R^d$ by translations.}
\end{conjecture}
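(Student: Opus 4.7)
The statement above is the celebrated Fuglede conjecture from 1974, and I should acknowledge at the outset that this is not a theorem but a deep open problem: Tao (2004) refuted the spectral~$\Rightarrow$~tile direction in $\R^d$ for $d\ge 5$, subsequent work of Matolcsi, Kolountzakis, Farkas--Révész--Móra and others pushed the counterexamples down to $d\ge 3$ in both directions, while the conjecture remains open for $d=1,2$ and has recently been proved for convex bodies in all dimensions by Lev--Matolcsi (2022). So any honest ``proof proposal'' can only target one direction, a restricted geometric class, or a low-dimensional case. Below I sketch the Fourier-analytic strategy that would underlie such an attack.

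For the tiling~$\Rightarrow$~spectral direction, I would begin from the identity $\sum_{t\in T}\mathbbm{1}_\Gamma(x-t)=1$ a.e., which upon Fourier transform yields $\widehat{\mathbbm{1}_\Gamma}(\xi)\,\widehat{\delta_T}(\xi)=0$ for $\xi\neq 0$ and $\widehat{\mathbbm{1}_\Gamma}(0)=|\Gamma|$. The natural candidate spectrum is a ``dual'' set $\Lambda$ of $T$ (the dual lattice when $T$ is a lattice, or a character-theoretic substitute for quasi-periodic tilings), chosen so that $(\Lambda-\Lambda)\setminus\{0\}$ lies in the zero set $Z(\widehat{\mathbbm{1}_\Gamma})$; this gives the orthogonality $\langle e_\lambda,e_{\lambda'}\rangle_{L^2(\Gamma)}=0$. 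The second step is Parseval-completeness, $\sum_{\lambda\in\Lambda}|\widehat{\mathbbm{1}_\Gamma}(\xi-\lambda)|^2=|\Gamma|^2$ for a.e.\ $\xi$, verifiable via Poisson summation in the lattice case and via the Coven--Meyerowitz $(T1)$--$(T2)$ cyclotomic conditions in dimension one.

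For the spectral~$\Rightarrow$~tiling direction, the natural plan is to reverse the above: given an orthonormal exponential basis indexed by $\Lambda$, one has $(\Lambda-\Lambda)\setminus\{0\}\subseteq Z(\widehat{\mathbbm{1}_\Gamma})$, and one tries to extract from $\Lambda$ a translation set $T$ for which $\Gamma+T=\R^d$. The \emph{main obstacle}, and the one that genuinely defeats the general conjecture, is that exponential orthonormal bases can arise from sporadic, non-geometric sources -- the Hadamard-matrix and finite abelian group constructions of Tao, Kolountzakis and Matolcsi produce spectra with no tiling partner, and there is no known general mechanism to rule them out. Any realistic attack must therefore inject extra rigidity (convexity, low dimension, or fractal self-similarity) that forces $\Lambda$ into a structured form from which a tiling can actually be read off; this is precisely the phenomenon exploited, in the fractal direction, by the non-autonomous affine measures $\mu$ of \eqref{infinite-convolution} studied in the rest of this paper, where the digit sets $B_k$ provide the missing geometric scaffolding.
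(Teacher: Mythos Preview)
Your assessment is correct and matches the paper's treatment exactly: the paper does \emph{not} prove this statement. It is stated as the Fuglede \emph{conjecture}, immediately followed by the remark that it has been disproved for $d\ge 3$ (with citations to Tao, Matolcsi, Kolountzakis, Farkas--R\'ev\'esz, etc.) and remains open for $d=1,2$ --- precisely the status you describe. The conjecture appears only as historical motivation for the study of spectral measures, not as a result of the paper, so there is no proof to compare against; your decision to flag it as unprovable in general and to sketch the Fourier-analytic heuristics for partial cases is the appropriate response.
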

Although the conjecture has been disproved for $d \ge 3$, it remains open for $d=1$ and $d=2$. We refer readers to  \cite{Farkas-Matolcsi-Mora-2006,Farkas-Revesz-2006,Kolountzakis-Matolcsi-2006a,Kolountzakis-Matolcsi-2006b,Matolcsi-2005, Tao-2004} for related studies and references therein. However the connection between spectrality and tiling has raised huge interest, and some affirmative results have been proved for special cases \cite{Iosevich-Katz-Tao-2003,Laba-2001}.

Fractal measures are singular with respect to Lebesgue measures; see~\cite{Falco03} for the background of fractal geometry.  There is a big difference in the theory of spectrum between absolutely continuous measures and singular measures. In particular,  Jorgensen and Pedersen~\cite{Jorgensen-Pedersen-1998}  found that some self-similar measures may have spectra. For example, let $\mu$ be the self-similar measure on $\R$ given by
$$
\mu(A) = \frac{1}{2} \mu(S_1^{-1}(A)) + \frac{1}{2} \mu(S_2^{-1}(A)),
$$
for all Borel $A\subset\R$, where $S_1(x)=\frac{1}{4}x$ and $S_2(x)=\frac{1}{4}x+\frac{1}{2}$.
Then $\mu $ is a spectral measure with a spectrum $$\Lambda = \bigcup_{n=1}^\f \set{\ell_1 + 4\ell_2 + \cdots + 4^{n-1} \ell_n: \ell_1,\ell_2,\cdots,\ell_n \in \set{0,1}}.$$
From then on, the spectrality of various fractal measures, such as self-similar measures, self-affine measures and non-autonomous affine measures, has been extensively studied. See \cite{An-Fu-Lai-2019,An-He-He-2019,An-He-2014,An-Lai-2020, An-Wang-2021,Dai-He-Lau-2014,Deng-Chen-2021, Dutkay-Haussermann-Lai-2019,Laba-Wang-2002,Li-2009,Liu-Dong-Li-2017,
	He-Tang-Wu-2019} and references therein.

Admissible pairs are the key to study the spectrality of infinite convolutions. Given a $d \times d$ expansive integral matrix $R$ (all eigenvalues have modulus strictly greater than $1$) and a finite subset  $B\sse \Z^d$ with $\# B\ge 2$.  If there exists $L\sse \Z^d$ such that the matrix
$$
\left[ \frac{1}{\sqrt{\# B}} e^{-2 \pi i  (R^{-1}b)\cdot\ell }  \right]_{b \in B, \ell \in L}
$$
is unitary, we call $(R, B)$ an {\it admissible pair} in $\R^d$, we also call $(R,B,L)$ a {\it Hadamard triple} in $\R^d$. See~\cite{Dutkay-Haussermann-Lai-2019} for details.
	
In \cite{Laba-Wang-2002}, {\L}aba and Wang proved that self-similar measures generated by admissible pairs  with equal weights in $\R$ are  spectral measures. In~\cite{Dai-He-Lau-2014}, Dai, He and Lau completely resolved the spectrality  of the self-similar measures generated by admissible pairs  $(N, B)$ where $B=\{0,1,\ldots, b-1\}$ and $b | N$. In 2019, Dutkay, Haussermann and Lai \cite{Dutkay-Haussermann-Lai-2019} generalized {\L}aba and Wang's work to higher dimensional case and proved that self-affine measures generated by admissible pairs with equal weights in $\R^d$ are  spectral measures.

It is natural to investigate the spectrality of infinite convolutions since  fractal spectral measures may be represented by infinite convolutions. The spectrality of the infinite convolution generated by admissible pairs was first studied by Strichartz~\cite{Strichartz-2000}. Since  then, huge interest has been aroused by the spectral question on infinite convolutions.

In 2017, Dutkey and Lai \cite{Dutkay-Lai-2017} proved that the infinite convolution $\mu$ is spectral if $\mu$ is compactly supported and satisfies no-overlap condition, and if the infimum of the singular values of $\frac{1}{\#  B_n}[|\hat{\mu}_{>n}(l)|e^{-2\pi i<b,l>}]_{l\in L_n, b\in B_n}$ is positive.
In 2019, An, Fu and Lai  \cite{An-Fu-Lai-2019} studied this question in $\R$ and proved that the infinite convolution $\mu$ is spectral if $B_n\subset\{0,1,\ldots,N_n-1\}$ for all $n\geq 1$ and $\liminf_{n\to \infty} \#  B_n <\infty$ where the support of $\mu$ is a compact subset of $[0,1]$.  We refer  readers to ~\cite{An-Fu-Lai-2019,An-He-Lau-2015,An-He-Li-2015,Dai-2012, Dutkay-Haussermann-Lai-2019,Dutkay-Lai-2017,Laba-Wang-2002} for further studies on infinite convolutions.

Currently, the spectral theory of infinite convolutions mainly focuses on the measures with compact support. In this paper, we study the infinite convolutions generated by infinitely many admissible pairs which may not be compactly  supported in $\R^d$.

\section{Main results and Examples}\label{sec_mr}
In this section, we state the key definitions and our main conclusions. First, we give the definition of equivalence of two sequences consisting of sets, which we use later to reveal the connections between the infinite convolutions with compact support and the ones without compact support.

\begin{definition}\label{def_equivalent}
Two sequences of finite sets $\{A_k\}_{k=1}^\infty$ and $\{A'_k\}_{k=1}^\infty$ in $\R^d$ are called {\it equivalent} if
$$
\sum_{k=1}^{\f}\Big(\frac{\#(A'_k\setminus A_k)}{\# A'_k}+\frac{\#(A_k\setminus A'_k)}{\# A_k}\Big)<\f.
$$
\end{definition}
Obviously, $\{A_k\}_{k=1}^\infty$ and $\{A'_k\}_{k=1}^\infty$ are equivalent if and only if
$$
\sum_{k=1}^{\f}\max\bigg\{\frac{\#(A'_k\setminus A_k)}{\# A'_k},\frac{\#(A_k\setminus A'_k)}{\# A_k}\bigg\}<\f.
$$

Our first conclusion  shows that the infinite convolutions generated by equivalent sequences converge simultaneously.
\begin{theorem}\label{thm_equivalent}
Given two equivalent sequences $\{A_k\}_{k=1}^\infty$ and $\{A'_k\}_{k=1}^\infty$ in $\R^d$. Then the infinite convolution $\nu$ of $\{A_k\}_{k=1}^\infty $ given by \eqref{def_ica}  exists if and only if the infinite convolution $\nu'$ of $\{A_k'\}_{k=1}^\infty $ exists.
\end{theorem}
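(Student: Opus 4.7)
My plan is to realize $\nu_n$ and $\nu_n'$ as the distributions of partial sums of independent random variables and couple the two sequences so that they agree from some point onward. On a common probability space, for each $k$ construct a pair $(X_k, X_k')$ using the \emph{maximal coupling}: $X_k$ is uniform on $A_k$, $X_k'$ is uniform on $A_k'$, and
\begin{equation*}
\mathbb{P}(X_k \neq X_k') = \|\delta_{A_k} - \delta_{A_k'}\|_{TV},
\end{equation*}
and take the pairs independent across $k$. Then $S_n = X_1+\cdots+X_n$ has distribution $\nu_n$, and $S_n' = X_1'+\cdots+X_n'$ has distribution $\nu_n'$.

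The first key step is to bound the total variation distance. Splitting the sum $\tfrac{1}{2}\sum_x |\delta_{A_k}(\{x\}) - \delta_{A_k'}(\{x\})|$ over $A_k \cap A_k'$, $A_k \setminus A_k'$, and $A_k' \setminus A_k$, and estimating the ``density mismatch'' term $\#(A_k \cap A_k')\bigl|\tfrac{1}{\# A_k} - \tfrac{1}{\# A_k'}\bigr|$ by observing that the WLOG assumption $\# A_k \leq \# A_k'$ gives $\# A_k' - \# A_k \leq \#(A_k' \setminus A_k)$, one readily obtains
\begin{equation*}
\|\delta_{A_k} - \delta_{A_k'}\|_{TV} \leq \frac{\#(A_k \setminus A_k')}{\# A_k} + \frac{\#(A_k' \setminus A_k)}{\# A_k'}.
\end{equation*}
Definition~\ref{def_equivalent} then ensures $\sum_{k} \mathbb{P}(X_k \neq X_k') < \infty$, whence the Borel--Cantelli lemma yields that almost surely $X_k = X_k'$ for all sufficiently large $k$.

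Consequently $S_n - S_n'$ is almost surely eventually constant, so $\{S_n\}$ converges almost surely if and only if $\{S_n'\}$ does. To finish, I invoke L\'evy's equivalence theorem: for sums of independent random variables, convergence in distribution is equivalent to almost sure convergence. Since $\nu_n$ converging weakly to a probability measure is, by definition, convergence in distribution of $S_n$, and analogously for $\nu_n'$, the desired equivalence of the existence of $\nu$ and $\nu'$ follows. The only pieces requiring genuine care are the total variation bound and the verification that the maximal couplings can be realized independently across $k$ on a single product space; both are standard, so I do not anticipate a serious obstacle.
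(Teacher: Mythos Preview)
Your proposal is correct and follows essentially the same route as the paper: couple the uniform variables so that $\sum_k \mathbb{P}(X_k \ne X_k') < \infty$, apply Borel--Cantelli to get $X_k = X_k'$ eventually almost surely, and invoke the equivalence of distributional and almost sure convergence for sums of independent random vectors. The only cosmetic difference is that the paper builds the coupling explicitly on $[0,1]$ and obtains the exact value $\mathbb{P}(X_k \ne Y_k) = \max\bigl\{\tfrac{\#(A'_k \setminus A_k)}{\# A'_k}, \tfrac{\#(A_k \setminus A'_k)}{\# A_k}\bigr\}$ (which is in fact the total variation distance, so their construction \emph{is} your maximal coupling), whereas you bound the TV by the sum of the two ratios; either estimate plugs directly into Definition~\ref{def_equivalent}.
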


In this paper, we characterize the spectral property of infinite convolutions through  equi-positive measures (see Definition \ref{def_equi_measure}). The next conclusion shows that the infinite convolutions generated by equivalent sequences are equi-positive simultaneously.
\begin{corollary}\label{cor_equivalent}
Let $\{(R_k,B_k)\}_{k=1}^\infty $ and $\{(R_k,B'_k)\}_{k=1}^\infty $ be two sequences in $\R^d$. Suppose that $\{B_k\}_{k=1}^\infty$ and $\{B'_k\}_{k=1}^\infty$ are equivalent. Then the infinite convolution $\mu$ of $\{(R_k,B_k)\}_{k=1}^\infty $ given by \eqref{infinite-convolution} exists if and only if the infinite convolution $\mu'$ of $\{(R_k,B'_k)\}_{k=1}^\infty $ exists.
	
Moreover, if $\mu$ exists, then $\mu$ is an equi-positive measure if and only if $\mu'$ is an equi-positive measure.
\end{corollary}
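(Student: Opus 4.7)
The plan is to reduce the corollary to Theorem \ref{thm_equivalent} (for the existence claim) and to the equi-positivity transfer result alluded to in the abstract, via a single change-of-variables argument.

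First I would set $T_k = R_k R_{k-1} \cdots R_1$ and define
$$A_k = T_k^{-1} B_k, \qquad A'_k = T_k^{-1} B'_k.$$
Because $T_k$ is invertible, the map $b \mapsto T_k^{-1} b$ is a bijection of $\R^d$, and bijections commute with set differences and preserve cardinality. Hence
$$\# A_k = \# B_k, \quad \#(A_k \setminus A'_k) = \#(B_k \setminus B'_k), \quad \#(A'_k \setminus A_k) = \#(B'_k \setminus B_k).$$
Substituting these identities into Definition \ref{def_equivalent} shows that $\{A_k\}_{k=1}^\infty$ is equivalent to $\{A'_k\}_{k=1}^\infty$ whenever $\{B_k\}_{k=1}^\infty$ is equivalent to $\{B'_k\}_{k=1}^\infty$.

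Comparing \eqref{discrete-convolution} with \eqref{def_mun}, one sees that $\mu_k$ is precisely the partial convolution $\delta_{A_1} * \cdots * \delta_{A_k}$ associated to the sequence $\{A_k\}_{k=1}^\infty$, and similarly $\mu'_k = \delta_{A'_1} * \cdots * \delta_{A'_k}$. Therefore $\mu$ is the infinite convolution of $\{A_k\}_{k=1}^\infty$ in the sense of \eqref{def_ica}, and $\mu'$ is the infinite convolution of $\{A'_k\}_{k=1}^\infty$. Theorem \ref{thm_equivalent} then yields the existence equivalence at once. For the second claim, I would invoke the companion transfer result for equi-positivity under equivalence (stated in the abstract, and proved in the body of the paper together with Theorem \ref{thm_equivalent}) applied to these $\{A_k\}$ and $\{A'_k\}$.

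The main obstacle is not in this corollary itself---the reduction is elementary, hinging only on the bijectivity of $T_k$---but in setting up and citing the equi-positivity transfer for abstract equivalent sequences. That underlying result presumably requires controlling the total variation between $\nu_k$ and $\nu'_k$ by the equivalence tail sum, and then propagating this control to the tail measures $\mu_{>k}$ and $\mu'_{>k}$ in \eqref{def_mugn} that define equi-positivity. Once that machinery is available at the level of $\{A_k\}$, the present corollary is essentially a restatement in the $(R_k,B_k)$ language, obtained by the change of variables above.
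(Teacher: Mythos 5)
Your treatment of the existence claim is correct and is exactly the paper's route: the map $b\mapsto (R_k\cdots R_1)^{-1}b$ is a bijection, so it preserves the cardinalities of the relevant set differences, the sequences $\{(R_k\cdots R_1)^{-1}B_k\}_{k=1}^\infty$ and $\{(R_k\cdots R_1)^{-1}B'_k\}_{k=1}^\infty$ are equivalent, and Theorem \ref{thm_equivalent} applies directly.

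The gap is in the second claim. There is no separate ``companion transfer result for equi-positivity under equivalence'' in the paper that you can cite: that transfer \emph{is} the content of this corollary, so deferring to it is circular. Note also that equi-positivity is not a property of the abstract sequence $\{A_k\}$ alone; by Definition \ref{def_equi_measure} it is phrased through the rescaled tails $\nu_{>k}=\mu_{>k}\circ(R_k\cdots R_1)^{-1}$ of \eqref{def_nu_n}, so these must actually be estimated. The argument you sketch but do not carry out runs as follows. Reuse the coupling $(X_k,Y_k)$ on $([0,1],\mathscr{B},\mbb{L}_{[0,1]})$ constructed in the proof of Theorem \ref{thm_equivalent}, for which $\mbb{L}_{[0,1]}(X_k\ne Y_k)=\max\big\{\#(B'_k\setminus B_k)/\#B'_k,\ \#(B_k\setminus B'_k)/\#B_k\big\}$. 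Since the total variation distance between the laws of two random vectors is at most twice the probability that they differ, one gets $|\nu_{>k_0}-\nu'_{>k_0}|(\R^d)\le 2\sum_{k>k_0}\mbb{L}_{[0,1]}(X_k\ne Y_k)$, and by equivalence the right-hand side is below any prescribed $\epsilon>0$ once $k_0\ge K(\epsilon)$; hence $\{\nu_{>k}\}_{k\ge K}\subseteq_\epsilon\{\nu'_{>k}\}_{k\ge K}$ and conversely. Then Lemma \ref{lem_perturbation} (an $\epsilon_0$-equi-positive family remains $(\epsilon_0-\epsilon)$-equi-positive under $\epsilon$-perturbation, because $|\hat{\nu}_1(\xi)-\hat{\nu}_2(\xi)|\le|\nu_1-\nu_2|(\R^d)$), applied with $\epsilon<\epsilon_0$ to a tail of the equi-positive subsequence of $\{\nu_{>k}\}$, produces an equi-positive subsequence of $\{\nu'_{>k}\}$, and symmetrically. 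These two steps --- the coupling bound on the total variation of the tails and the perturbation stability of equi-positivity --- are the actual work of the corollary and must appear explicitly in your proof.
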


It is interesting to study the spectrality of infinite convolutions with non-compact support. The following theorem provides a new method to construct various infinite convolutions with non-compact support. Moreover, it also shows that  infinite convolutions generated by equivalent sequences have   common spectra. Recall that a sequence $\{(R_k,B_k)\}_{k=1}^\infty $ satisfies uniform contractive condition if $\sup_{k} \Vert R_k^{-1}\Vert<1.$
\begin{theorem}\label{thm_common spectrum}
Given two sequences of admissible pairs  $\{(R_k,B_k)\}_{k=1}^\infty $ and $\{(R_k,B'_k)\}_{k=1}^\infty $  satisfying uniform contractive condition. Suppose that $\{B_k\}_{k=1}^\infty$ and $\{B'_k\}_{k=1}^\infty$ are equivalent and $B'_k\equiv B_k \pmod{R_k\Z^{d}}$ for all $k>0$.
If the infinite convolution $\mu$ of $\{(R_k,B_k)\}_{k=1}^\infty $ exists and is an equi-positive measure, then  \\
(1) the infinite convolution $\mu'$ of $\{(R_k,B'_k)\}_{k=1}^\infty$ exists and is an equi-positive measure; \\
(2) there exists $\Lambda\subseteq\Z^d$ such that $(\mu,\Lambda)$ and  $(\mu',\Lambda)$ are all spectral pairs.
\end{theorem}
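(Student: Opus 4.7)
The plan is to dispose of part (1) via Corollary~\ref{cor_equivalent} and to obtain part (2) by constructing one common candidate spectrum $\Lambda \subseteq \Z^d$, then invoking the paper's spectrality criterion for equi-positive infinite convolutions. Part (1) is immediate: the equivalence of $\{B_k\}$ and $\{B'_k\}$ feeds Corollary~\ref{cor_equivalent}, which simultaneously delivers existence of $\mu'$ and inheritance of equi-positivity from $\mu$.

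For part (2), the first task is to show that any Hadamard complement $L_k \subseteq \Z^d$ for $(R_k, B_k)$ is also a Hadamard complement for $(R_k, B'_k)$. The congruence $B'_k \equiv B_k \pmod{R_k \Z^d}$ pairs each $b \in B_k$ with a unique $b' \in B'_k$ satisfying $R_k^{-1}(b'-b) \in \Z^d$, so for every $\ell \in L_k \subseteq \Z^d$ one has $e^{-2\pi i (R_k^{-1} b') \cdot \ell} = e^{-2\pi i (R_k^{-1} b) \cdot \ell}$. Hence the two Hadamard matrices coincide after this canonical row matching, and $(R_k, B'_k, L_k)$ is again admissible with the very same $L_k$. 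With the sequence $\{L_k\}$ shared, I would then define the candidate spectrum $\Lambda \subseteq \Z^d$ by the standard tower construction from $\{L_j\}$ and $\{R_j\}$, of which the set displayed in the introduction (for the self-similar case $R=4$, $L=\{0,1\}$) is the prototype; since every $R_k$ has integer entries and each $L_k \subseteq \Z^d$, indeed $\Lambda \subseteq \Z^d$.

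Orthogonality of $\{e_\lambda\}_{\lambda \in \Lambda}$ in $L^2(\mu)$ and in $L^2(\mu')$ then reduces, for distinct $\lambda,\lambda'$ whose tower expansions first disagree at index $j_0$, to the Hadamard vanishing of the $j_0$-th factor in the product expansion of $\widehat{\mu}(\lambda-\lambda')$, respectively of $\widehat{\mu'}(\lambda-\lambda')$; the shared complements together with the mod-$R_{j_0}\Z^d$ congruence force those two factors to coincide and both to vanish. For completeness I would invoke the paper's spectrality theorem for equi-positive infinite convolutions of admissible pairs (Theorem~\ref{thm_equi_spectral}), applied separately to $(\mu, \{(R_k, B_k, L_k)\})$ and $(\mu', \{(R_k, B'_k, L_k)\})$, with part~(1) supplying the equi-positivity of $\mu'$.

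The main obstacle is to verify that Theorem~\ref{thm_equi_spectral} produces the exact same set $\Lambda$ for both measures rather than two a priori different spectra. This amounts to tracing through its completeness argument and confirming that the construction of $\Lambda$ depends only on the shared data $\{(R_k, L_k)\}$ together with the equi-positivity, while the mod-$R_k\Z^d$ congruence removes any residual dependence on the specific digit sets $B_k$. Once that dependence-free form of the construction is in hand, the two applications of Theorem~\ref{thm_equi_spectral} yield the desired common spectrum $\Lambda$.
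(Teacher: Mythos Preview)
Your outline for part~(1) and the sharing of the Hadamard complements $L_k$ via Lemma~\ref{lem_hadamard}(ii) matches the paper exactly. The gap is in how you propose to resolve your ``main obstacle'' in part~(2).

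The spectrum produced by Theorem~\ref{thm_equi_spectral} is \emph{not} the naive tower $\bigcup_k (L_1+R_1^T L_2+\cdots)$; it is the twisted set \eqref{def_Lmdj}, which depends on the integers $k_{\lambda,j}\in\Z^d$ selected from the equi-positive family $\{\nu_{>n_j}\}$ via \eqref{eq_equi-positive}. So your first candidate $\Lambda$ is generally too small, and your orthogonality paragraph does not lead to completeness. More importantly, your diagnosis that ``the mod-$R_k\Z^d$ congruence removes any residual dependence on the specific digit sets'' is off target: the congruence is what lets $L_k$ serve both sequences, but it says nothing about the integers $k_{\lambda,j}$, which are read off from the tail measures $\nu_{>n_j}$ and $\nu'_{>n_j}$ --- and those tails are genuinely different measures.

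The mechanism the paper actually uses is the one buried in the proof of Lemma~\ref{lem_perturbation} (equation~\eqref{eq_kv=kvp}): when $|\nu_{>n_j}-\nu'_{>n_j}|(\R^d)<\epsilon$, one may take $k_{x,\nu'_{>n_j}}=k_{x,\nu_{>n_j}}$. The equivalence of $\{B_k\}$ and $\{B'_k\}$ (not the congruence) forces this total-variation closeness for all large $j$, as in the proof of Corollary~\ref{cor_equivalent}. Thus for $j\ge J$ the two equi-positive families share the same integers $k_{x,\cdot}$, and since the remark after Theorem~\ref{thm_equi_spectral} says $\Lambda$ depends only on $R_k$, $L_k$, and these $k_{\lambda,j}$, running the construction once produces a set $\Lambda$ that is simultaneously a spectrum for $\mu$ and for $\mu'$. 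Your plan becomes correct once you replace the congruence explanation with this total-variation argument.
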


Next, we provide a sufficient condition for the existence of infinite convolutions.  Given a sequence $\{(R_k,B_k)\}_{k=1}^\infty $. We say $\{( R_k,B_k)\}_{k=1}^\infty $ satisfies {\it remainder bounded condition (RBC)} if
\[
\sum_{k=1}^{\infty} \frac{\# B_{k,2}}{\# B_k} < \infty,
\]
where $B_{k,1}=B_k \cap R_k[-\frac{1}{2},\frac{1}{2})^d$ and $B_{k,2}=B_k \backslash B_{k,1}.$ It is a useful condition to study infinite convolutions without compact support; see Example \ref{ex_ncpt}.

\begin{theorem}\label{thm_RBC_exists}
Let $\{( R_k,B_k)\}_{k=1}^\infty $ be a sequence satisfying {\it RBC} and uniform contractive condition. Then the infinite convolution $\mu$ of $\{( R_k,B_k)\}_{k=1}^\infty $ exists.
\end{theorem}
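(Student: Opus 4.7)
The plan is to reduce to the compactly supported setting and then invoke Corollary~\ref{cor_equivalent}. For each $k$ the ``truncated'' set $B_{k,1}=B_k\cap R_k[-\tfrac{1}{2},\tfrac{1}{2})^d$ produces translates $(R_k\cdots R_1)^{-1}B_{k,1}\subseteq (R_{k-1}\cdots R_1)^{-1}[-\tfrac12,\tfrac12)^d$, whose diameters shrink geometrically under the uniform contractive condition. I would replace $B_k$ by $B'_k:=B_{k,1}$ (with $B'_k:=B_k$ for the finitely many indices where $\# B_{k,1}<2$), prove convergence of the corresponding compactly supported infinite convolution $\mu'$, and then pull existence of $\mu$ back through the equivalence.

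The first step is verifying the hypotheses of Corollary~\ref{cor_equivalent}. RBC forces $\# B_{k,2}/\# B_k\to 0$, so for all $k\ge k_0$ large enough $\# B_{k,1}>\tfrac{1}{2}\# B_k\ge 1$, giving $\# B'_k\ge 2$. Since $B'_k\subseteq B_k$ for every $k$, one has $\#(B'_k\setminus B_k)=0$, and hence
\[
\sum_{k=1}^{\infty}\Big(\frac{\#(B'_k\setminus B_k)}{\# B'_k}+\frac{\#(B_k\setminus B'_k)}{\# B_k}\Big)=\sum_{k\ge k_0}\frac{\# B_{k,2}}{\# B_k}<\infty
\]
by RBC, so $\{B_k\}$ and $\{B'_k\}$ are equivalent in the sense of Definition~\ref{def_equivalent}.

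The second step is establishing existence of $\mu'$. Setting $S_k=R_k\cdots R_1$ and $c=\sup_k\|R_k^{-1}\|<1$, for $k\ge k_0$ the support of $\delta_{S_k^{-1}B'_k}$ is contained in $S_{k-1}^{-1}[-\tfrac12,\tfrac12)^d$, a set lying within distance $c^{k-1}\sqrt d/2$ of the origin. Writing $\tilde\mu_k:=\delta_{S_{k_0}^{-1}B'_{k_0}}\ast\cdots\ast\delta_{S_k^{-1}B'_k}$, for every bounded Lipschitz function $f$ a short computation gives
\[
|\tilde\mu_k(f)-\tilde\mu_{k-1}(f)|\le \mathrm{Lip}(f)\cdot c^{k-1}\sqrt d/2,
\]
so $\{\tilde\mu_k\}$ is Cauchy in the bounded Lipschitz metric; combined with the uniformly bounded supports this yields a weak limit $\tilde\mu$, and then $\mu'_k=\mu'_{k_0-1}\ast\tilde\mu_k$ converges weakly to $\mu':=\mu'_{k_0-1}\ast\tilde\mu$. (Alternatively, L\'evy's continuity theorem applied to the infinite product $\prod_j\hat{\delta}_{S_j^{-1}B'_j}(\xi)$, each factor of which is $1+O(c^{j-1}|\xi|)$ uniformly on compact sets, gives the same conclusion.) Corollary~\ref{cor_equivalent} applied to the equivalent sequences with common matrices $R_k$ then yields existence of $\mu$. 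The main (but mild) obstacle is the geometric Cauchy estimate for $\tilde\mu_k$; everything else is a direct invocation of the equivalence framework already established.
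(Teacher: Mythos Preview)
Your proof is correct and shares the paper's overall strategy: truncate $B_k$ to $B_{k,1}$, verify equivalence of $\{B_k\}$ and $\{B_{k,1}\}$ via RBC, establish convergence of the truncated infinite convolution, and transfer existence back through the equivalence framework (you cite Corollary~\ref{cor_equivalent}; the paper cites Theorem~\ref{thm_equivalent} directly, which is the same thing here).

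The one substantive difference is in how convergence of the truncated convolution $\mu'$ is obtained. The paper invokes the distribution form of Kolmogorov's three-series theorem (Theorem~\ref{thm_three-series-theorem}, due to Jessen--Wintner): with $\eta_k=\delta_{(R_k\cdots R_1)^{-1}B_{k,1}}$ and $r=1$, it checks that $\sum\eta_k(\R^d\setminus D(r))$, $\sum E(\eta_{k,r})$ and $\sum V(\eta_{k,r})$ all converge, using the uniform contractive condition to control the latter two. Your bounded-Lipschitz Cauchy estimate is more elementary and exploits directly that the truncated measures have geometrically shrinking, uniformly bounded supports; it sidesteps the probabilistic machinery entirely. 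The three-series route, by contrast, is the tool that remains available when supports are not compact and fits the Jessen--Wintner framework the paper has already set up in Section~\ref{sec_existence}. Both arguments are short; yours is self-contained, while the paper's keeps the existence question inside a single general criterion.
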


Although   RBC  and uniform contractive condition are sufficient for the existence of infinite convolution, they are not enough for the spectrality of infinite convolutions.
Given $l\in (0,1)$, we say $\{( R_k,B_k)\}_{k=1}^\infty $ satisfies  \textit{partial concentration condition (PCC)} with $l$ if
\[
\sup\big\{\big(R_k^{-1}(x_1-x_2)\big)\cdot\xi:x_1,x_2\in\bar{B}\big(0,\frac{\sqrt{d}}{2}\big)\big\}<1-l,
\]
for all $\xi\in [-1,1]^d$, and
\[
\sum_{k=1}^{\f}\frac{\# B_{k,2}^{l}}{\# B_k}<\f,
\]
where $B_{k,1}^{l}=B_k\cap \{b:(R_k^{-1}b)\cdot \xi\in(-\frac{1-l}{2},\frac{1-l}{2})\  \text{for all}\ \xi\in[-1,1]^d\}$ and $B_{k,2}^{l}=B_k\backslash B_{k,1}^{l}.$

In the final conclusion, we   show that the  infinite convolution $\mu$ is a spetral measure if the sequence $\{( R_k,B_k)\}_{k=1}^\infty $ also has a subsequence satisfying partial concentration condition (PCC). For $d=1$, the partial concentration condition may be modified to various weaker conditions \cite{MZ24}.
\begin{theorem}\label{thm_PCC_spectral}
Let $\{( R_k,B_k)\}_{k=1}^\infty $ be a sequence of admissible pairs satisfying {\it RBC} and uniform contractive condition.
If there exists a subsequence $\{( R_{n_k},B_{n_k})\}_{k=1}^\infty  $ satisfying  PCC with some $l\in(0,1)$, then the infinite convolution $\mu$ exists and  is a spectral measure with a spectrum in $\Z^d$.

Moreover, if there exists a sequence $\{L_k\}_{k=1}^\infty $ such that $(R_k,B_k,L_k)$ is a Hadamard triple, and $0\in L_k\subseteq R_k^{T}[-\frac{1}{2},\frac{1}{2})^d$ for all $k>0$, then
$$
\Lambda = \bigcup_{k=1}^\f \{L_1+R_1^TL_2+\cdots+(R_{k-1}\cdots R_1)^{T}L_k\}
$$
is a spectrum of $\mu$.
\end{theorem}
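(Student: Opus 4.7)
The plan is to combine the existence given by Theorem~\ref{thm_RBC_exists} with a verification of equi-positivity for $\mu$, then invoke the general spectrality principle for equi-positive infinite convolutions of admissible pairs (Theorem~\ref{thm_equi_spectral}) to produce a spectrum in $\Z^d$; the explicit form of $\Lambda$ is then obtained from the standard Hadamard-triple tower construction. The existence of $\mu$ is immediate, since RBC and the uniform contractive condition are exactly the hypotheses of Theorem~\ref{thm_RBC_exists}.

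The core of the argument is to verify equi-positivity: exhibit a compact neighborhood $K$ of the origin and $\varepsilon>0$ such that $|\hat{\nu}_{>k}(\xi)|\ge\varepsilon$ uniformly for $\xi\in K$ and every $k\ge 0$, where $\nu_{>k}$ is as in \eqref{def_nu_n}. I would take $K=[-1,1]^d$ and factor
\[
\hat{\nu}_{>k}(\xi)=\prod_{j>k}\hat{\delta}_{(R_j R_{j-1}\cdots R_{k+1})^{-1}B_j}(\xi).
\]
For indices $j=n_i$ belonging to the PCC subsequence, split $B_{n_i}=B_{n_i,1}^l\cup B_{n_i,2}^l$: the defining inequality of PCC forces, for every $b\in B_{n_i,1}^l$, the phase $(R_{n_i}^{-1}b)\cdot\eta$ to lie strictly inside $(-\tfrac{1-l}{2},\tfrac{1-l}{2})$ for $\eta$ in the relevant range, so the associated exponentials concentrate in a half-plane and contribute a pointwise lower bound of the form $1-c_{n_i}$ with $\sum_i c_{n_i}<\infty$ thanks to the summability of $\#B_{n_i,2}^l/\#B_{n_i}$. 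Indices $j$ outside the subsequence are controlled only by RBC via the estimate $|\hat{\delta}_{(R_j\cdots R_{k+1})^{-1}B_j}(\xi)|\ge 1-2\,\#B_{j,2}/\#B_j$, again summable. A careful use of the uniform contractive condition ensures that after the change of variable by $(R_j\cdots R_{k+1})^{-1}$, the PCC half-plane estimate continues to apply uniformly on $K$. Taking the logarithm turns the product into a convergent sum, giving the desired uniform $\varepsilon>0$.

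With equi-positivity in hand, Theorem~\ref{thm_equi_spectral} delivers that $\mu$ is a spectral measure with spectrum in $\Z^d$, establishing part~(1). For part~(2), define
\[
\Lambda_k=L_1+R_1^T L_2+\cdots+(R_{k-1}\cdots R_1)^T L_k,\qquad \Lambda=\bigcup_{k\ge 1}\Lambda_k.
\]
The inclusion $L_k\subseteq R_k^T[-\tfrac{1}{2},\tfrac{1}{2})^d$ guarantees that every element of $\Lambda$ has a unique representation, and the Hadamard triple condition, combined with the telescoping factorization $\hat{\mu}=\hat{\mu}_k\cdot\hat{\mu}_{>k}$ (using $\mu_{>k}$ pulled back through $R_k\cdots R_1$), makes $\{e_\lambda\}_{\lambda\in\Lambda}$ an orthonormal family in $L^2(\mu)$. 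Completeness is established in the Jorgensen--Pedersen spirit by proving $Q_\Lambda(\xi):=\sum_{\lambda\in\Lambda}|\hat{\mu}(\xi+\lambda)|^2\equiv 1$; here equi-positivity is indispensable, since the uniform lower bound on $|\hat{\nu}_{>k}|$ prevents the tail Bessel contributions from collapsing and closes the inductive upgrade from truncated towers $\Lambda_k$ to the full $\Lambda$. The main obstacle is Step~2: keeping the lower bound on the \emph{infinite} product uniform in both $\xi\in K$ and $k$ without the cushion of compact support. PCC supplies the per-factor pointwise estimate from the subsequence, while RBC furnishes the summability needed along the complementary indices; the delicate point is choosing $K$ large enough to absorb the later frequency translates $(R_{k-1}\cdots R_1)^T L_k$ yet small enough that the PCC half-plane bound remains quantitatively nonzero.
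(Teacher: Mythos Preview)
Your overall architecture (existence via Theorem~\ref{thm_RBC_exists}, then equi-positivity, then Theorem~\ref{thm_equi_spectral}) matches the paper, but the heart of your equi-positivity argument has a genuine gap.

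You claim that for indices $j$ \emph{outside} the PCC subsequence one has $|\hat\delta_{(R_j\cdots R_{k+1})^{-1}B_j}(\xi)|\ge 1-2\,\#B_{j,2}/\#B_j$ on $K=[-1,1]^d$. This is false for the first few factors of the product. RBC only tells you that $B_{j,1}\subseteq R_j[-\tfrac12,\tfrac12)^d$; hence for $b\in B_{j,1}$ the point $R_j^{-1}b$ lies in $[-\tfrac12,\tfrac12)^d$, but when $j$ is close to $k+1$ there is essentially no further contraction, so the phases $2\pi\,(R_j\cdots R_{k+1})^{-1}b\cdot\xi$ range over an interval of length up to about $\pi d$. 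The exponentials can then cancel and the factor is \emph{not} of the form $1-o(1)$. Your logarithm-and-sum step therefore does not go through. A related symptom: you never invoke the \emph{first} PCC condition, the bound $\sup\{(R_{n_k}^{-1}(x_1-x_2))\cdot\xi:x_1,x_2\in\bar B(0,\sqrt d/2)\}<1-l$, yet this is precisely what controls the phase spread of those early factors.

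The paper's remedy is twofold. First, it replaces each $B_k$ by the bounded representative $B'_k\subseteq R_k[-\tfrac12,\tfrac12)^d$ with $B'_k\equiv B_k\pmod{R_k\Z^d}$; RBC makes $\{B_k\}$ and $\{B'_k\}$ equivalent, so by Corollary~\ref{cor_equivalent} it suffices to prove equi-positivity for $\mu'$. Second, and crucially, it does \emph{not} try to bound every tail $\nu'_{>k}$: it restricts to the subsequence $\{\nu'_{>n_k-1}\}$ so that the very first factor sits at a PCC index. The product is then split as $j=0$, $1\le j<J$, $j\ge J$. For $j\ge J$ uniform contraction makes all phases small. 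For $1\le j<J$ one has $(R_{n_k+j}\cdots R_{n_k+1})^{-1}B'_{n_k+j}\subseteq\bar B(0,\sqrt d/2)$, and the first PCC condition (applied through the leading $R_{n_k}^{-1}$) bounds the phase spread by $(1-\tfrac{l}{2})\pi<\pi$, so Lemma~\ref{lem_theta} gives a fixed positive lower bound $r((1-\tfrac{l}{2})\pi)^{J-1}$. Only the single factor $j=0$ uses the summability of $\#B_{n_k,2}^{l}/\#B_{n_k}$. Your scheme of spreading the PCC estimate over every occurrence of an $n_i$ inside an arbitrary tail, and covering the gaps with RBC alone, cannot substitute for this: the first PCC inequality is what keeps the early factors away from zero, and restricting to the tails $\nu'_{>n_k-1}$ is what makes that inequality applicable.
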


In the end,  we provide an example where the  spectral infinite convolution   is  not compactly supported.

\begin{example}\label{ex_ncpt}
For each $k\geq 1$,  let $R_k=diag (8(k+1),8(k+1))$ be a $2\times2$ diagonal matrix,
$$
B_k=\set{(i,j):i,j\in\{0,1,\cdots,k\} \ \text{and} \ (i,j)\ne(k,0)}\cup\{\big(k+8^k(k+1)!,0\big)\},
$$
and $L_k=\{(i,j):i,j\in8\cdot\{0,1,\cdots,k\}-8\cdot t_k\}, $
where $t_k=\frac{k+1}{2}$ if $k$ is odd, and  $t_k=\frac{k}{2}$ if $k$ is even.
Then   infinite convolution $\mu$  of $\{(R_k,B_k)\}_{k=1}^\infty $ exists and is a spectral measure without compact support.

Since  $B_{k,2}=\{\big(k+8^k(k+1)!,0\big)\}$, we have $
\sum_{k=1}^{\infty} \frac{\# B_{k,2}}{\# B_k} = \sum_{k=1}^{\infty} \frac{1}{(k+1)^2}<  \infty,
$
and it implies that  $\{( N_k,B_k)\}_{k=1}^\infty $ satisfies {\it RBC}. By Theorem \ref{thm_RBC_exists}, the infinite convolution $\mu$ exists.
	
It is clear that $\{(R_k,B_k,L_k)\}_{k=1}^\infty $ is a sequence of Hadamard triples and $0\in L_k\subseteq R_k^{T}[-\frac{1}{2},\frac{1}{2})^d$ for all $k>0$. We have
$$
\sup\big\{\big(R_k^{-1}(x_1-x_2)\big)\cdot\xi:x_1,x_2\in\bar{B}\big(0,\frac{\sqrt{2}}{2}\big)\big\}\le\frac{1}{8(k+1))}\cdot \sqrt{2}\cdot|(1,1)|=\frac{1}{4(k+1))},
$$
and
$$
|(R_k^{-1}b)\cdot \xi|\le\frac{1}{8(k+1))}\cdot |(k,k)|\cdot|(1,1)|=\frac{2k}{8(k+1)}<\frac{1}{4},
$$
for all $\xi\in[-1,1]^2$ and $b\in B_k\setminus\{\big(k+8^k(k+1)!,0\big)\}$, it implies that
$\{( R_{k},B_{k})\}_{k=1}^\infty  $ satisfies  PCC with $\frac{1}{4}$.
By Theorem \ref{thm_PCC_spectral}, $(\mu,\Lambda)$ is a spectral pair, where
$$
\Lambda = \bigcup_{k=1}^\f \{L_1+R_1^TL_2+\cdots+(R_{k-1}\cdots R_1)^{T}L_k\}.
$$
	
Since
$$
\bigg|\sum_{k=1}^{\f}(R_kR_{k-1}\cdots R_1)^{-1}\big(k+8^k(k+1)!,0\big)\bigg|=\sum_{k=1}^{\f} \frac{k+8^k(k+1)!}{8^k(k+1)!}=\f,
$$
it is clear that $\mu([0,n]^2)<1$ for all $n\in \N$.  	Therefore the measure $\mu$ is not compactly supported.	
\end{example}

The rest of the paper is organized as follows. In Section \ref{sec_pre}, we recall some definitions and  results, and the properties of equi-positive families are investigated. In Section \ref{sec_equ}, we study the properties of infinite convolutions generated by equivalent sequences,
and give the proofs of Theorem \ref{thm_equivalent}, Corollary \ref{cor_equivalent} and Theorem \ref{thm_common spectrum}. In Section \ref{sec_existence}, we study the existence of infinite convolutions and give the proof of Theorem \ref{thm_RBC_exists}. We study the
spectrality of  infinite convolutions and prove Theorem \ref{thm_PCC_spectral} in the last Section.

\section{Admissible pairs and equi-positive measures}\label{sec_pre}

First, we list some useful properties of admissible pairs; see \cite{Dutkay-Haussermann-Lai-2019, Laba-Wang-2002} for details.

\begin{lemma}\label{lem_hadamard}
	Suppose that $(R,B,L)$ is a Hadamard triple in $R^d$. Then \\	
\noindent	{\rm(i)} $L+l_0$ is a spectrum of $\delta_{R^{-1}B}$ for all $l_0\in\R^d$;
	
\noindent	{\rm(ii)} If $B'\equiv B \pmod{R\Z^d}$ and $L'\equiv L \pmod{R^T\Z^d}$, then $(R,B',L')$ is a Hadamard triple.
	
\noindent	{\rm(iii)} Given a finite sequence $\{(R_j,B_j,L_j)\}_{j=1}^n$ of  Hadamard triples in $\R^d$. Write $\mbf R=R_nR_{n-1}\cdots R_1$,
	$$
	\mbf B=R_nR_{n-1}\cdots R_2 B_1+\cdots+R_nB_{n-1}+B_n,
	$$
	and
	$$
	\mbf L=L_1+R_1^TL_2+\cdots+(R_{n-1}\cdots R_2R_1)^TL_n.
	$$
	Then $(\mbf R,\mbf B,\mbf L)$ is a Hadamard triple.
\end{lemma}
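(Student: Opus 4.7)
The plan is to prove (i) and (ii) as direct translations of the Hadamard triple condition, and then derive (iii) by induction from the two-step composition rule, which is the main calculation.

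For (i), recall that $(R,B,L)$ being a Hadamard triple is equivalent to $\{e_\ell\}_{\ell\in L}$ being an orthonormal set in $L^2(\delta_{R^{-1}B})$; since this space has dimension $\#B=\#L$, the set is actually an orthonormal basis, so $L$ is already a spectrum of $\delta_{R^{-1}B}$. For the translate, observe that $e_{\ell+l_0}(x)=e^{-2\pi i l_0\cdot x}e_\ell(x)$, so $\{e_{\ell+l_0}\}_{\ell\in L}$ is obtained from $\{e_\ell\}_{\ell\in L}$ by the multiplication operator with the unimodular function $x\mapsto e^{-2\pi i l_0\cdot x}$, which is unitary on $L^2(\delta_{R^{-1}B})$ and so carries one orthonormal basis to another. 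For (ii), write $b'=b+Rk_b$ with $k_b\in\Z^d$ and $\ell'=\ell+R^Tm_\ell$ with $m_\ell\in\Z^d$; a short expansion gives
\begin{equation*}
(R^{-1}b')\cdot\ell' - (R^{-1}b)\cdot\ell = b\cdot m_\ell + k_b\cdot\ell + (Rk_b)\cdot m_\ell \in \Z,
\end{equation*}
so the Hadamard matrix for $(R,B',L')$ agrees entrywise with that for $(R,B,L)$ under the natural bijections $b'\leftrightarrow b$, $\ell'\leftrightarrow\ell$, and hence remains unitary. These bijections are well-defined because distinct elements of $B$ (resp.\ $L$) must lie in distinct residue classes modulo $R\Z^d$ (resp.\ $R^T\Z^d$), a standard consequence of row/column orthogonality in the original Hadamard matrix.

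For (iii), I would induct on $n$, reducing to the two-step statement: if $(R_1,B_1,L_1)$ and $(R_2,B_2,L_2)$ are Hadamard triples then so is $(R_2R_1,\,R_2B_1+B_2,\,L_1+R_1^TL_2)$. A direct expansion yields
\begin{equation*}
\bigl((R_2R_1)^{-1}(R_2b_1+b_2)\bigr)\cdot(\ell_1+R_1^T\ell_2) \equiv (R_1^{-1}b_1)\cdot\ell_1 + (R_2^{-1}b_2)\cdot\ell_2 + \bigl((R_2R_1)^{-1}b_2\bigr)\cdot\ell_1 \pmod{\Z},
\end{equation*}
after absorbing the integer $b_1\cdot\ell_2$. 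Hence the entry of the candidate Hadamard matrix $M$ at row $R_2b_1+b_2$ and column $\ell_1+R_1^T\ell_2$ is the product of the two individual Hadamard entries times a cross-phase depending only on $(b_2,\ell_1)$. To verify $MM^*=I$, I would compute the $((b_1,b_2),(b_1',b_2'))$ entry and sum first over $\ell_2$: the Hadamard property of $(R_2,B_2,L_2)$ collapses this to $\#B_2\,\delta_{b_2,b_2'}$, which in turn kills the cross-phase (since it depends on $b_2-b_2'$); the residual sum over $\ell_1$ then collapses via $(R_1,B_1,L_1)$ to $\#B_1\,\delta_{b_1,b_1'}$, producing the identity. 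The main bookkeeping step is injectivity of the index maps $(b_1,b_2)\mapsto R_2b_1+b_2$ and $(\ell_1,\ell_2)\mapsto \ell_1+R_1^T\ell_2$, which ensures $\#\mbf B=\#B_1\#B_2$ and $\#\mbf L=\#L_1\#L_2$ so that $M$ is a genuine square matrix; both injectivity statements reduce to the same residue-class fact used in (ii).
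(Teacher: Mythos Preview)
Your argument is correct. Note, however, that the paper does not actually prove this lemma: it is stated with the remark ``see \cite{Dutkay-Haussermann-Lai-2019, Laba-Wang-2002} for details'' and used as a black box throughout. Your proof is the standard one and matches what appears in those references: parts (i) and (ii) are immediate from the unitarity condition, and part (iii) is the tensor/Kronecker structure of the composite Hadamard matrix, handled by summing over $\ell_2$ first to collapse the $B_2$-index and then over $\ell_1$. The only point worth tightening in a written-out version is the injectivity step for $(b_1,b_2)\mapsto R_2b_1+b_2$ and $(\ell_1,\ell_2)\mapsto \ell_1+R_1^T\ell_2$, which you correctly flag and which indeed follows from the residue-class observation in (ii); without it the matrix $M$ would not even be square.
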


We use $\mcal{P}(\R^d)$ to denote the set of all Borel probability measures on $\R^d$.
For $\mu \in \mcal{P}(\R^d)$, the \emph{Fourier transform} of $\mu$ is given by
$$
\wh{\mu}(\xi) = \int_{\R^d} e^{-2\pi i \xi \cdot x} \D \mu(x).
$$
For a finite set $B\subset \Z^d$, we write $\mathcal{M}_B(\xi)$ for the Fourier transform of the discrete measure $\delta_B$, that is,
\begin{equation}\label{def_FMB}
\mathcal{M}_B(\xi)=\frac{1}{\# B}\sum_{b\in B}e^{-2\pi i b \cdot\xi}.
\end{equation}

Given a Borel probability measure $\mu$ and a subset $\Lambda\subseteq\R^d$, we write
\begin{align*}
	Q_{\mu,\Lambda}(\xi)=\sum_{\lambda\in \Lambda}|\hat{\mu}(\xi+\lambda)|^2,
\end{align*}
where $Q_{\mu,\Lambda}(\xi)=0$ for all $\xi\in\R^d$ if $\Lambda=\emptyset$.
The following theorem is often used to verify the spectrality of measures; see~\cite{Jorgensen-Pedersen-1998} for the proof.
\begin{theorem}\label{thm_Q}
Let $\mu$ be a probability measure on $\R^d$, $\Lambda\subseteq\R^d$. Then the set $\{e^{-2\pi i \lambda\cdot x}:\lambda\in\Lambda\}$ is an  orthonormal basis in $L^2(\mu)$ if and only if $Q_{\mu,\Lambda}(\xi)\equiv1$ for all $\xi\in\R^d$.
\end{theorem}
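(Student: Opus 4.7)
The plan is to prove the equivalence as a direct consequence of Parseval's identity, splitting the ``only if'' direction into routine Bessel/Parseval machinery, and the ``if'' direction into orthogonality plus completeness, both read off from the scalar identity $Q_{\mu,\Lambda}(\xi)\equiv 1$. Throughout I will write $e_\xi(x)=e^{-2\pi i\xi\cdot x}$ and use that, for any $\xi\in\R^d$ and $\lambda\in\R^d$,
$$
\langle e_{-\xi},e_\lambda\rangle_{L^2(\mu)}=\int e^{2\pi i(\xi+\lambda)\cdot x}\D\mu(x)=\overline{\wh{\mu}(\xi+\lambda)},
$$
so that $|\langle e_{-\xi},e_\lambda\rangle|^2=|\wh{\mu}(\xi+\lambda)|^2$ and $\|e_{-\xi}\|_{L^2(\mu)}^2=\mu(\R^d)=1$.

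For the forward direction, I assume $\{e_\lambda:\lambda\in\Lambda\}$ is an orthonormal basis of $L^2(\mu)$ and apply Parseval to the unit vector $f=e_{-\xi}$. The identity
$$
1=\|e_{-\xi}\|^2=\sum_{\lambda\in\Lambda}|\langle e_{-\xi},e_\lambda\rangle|^2=\sum_{\lambda\in\Lambda}|\wh{\mu}(\xi+\lambda)|^2=Q_{\mu,\Lambda}(\xi)
$$
holds for every $\xi\in\R^d$, which is exactly the claim.

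For the reverse direction I first establish orthogonality. Fix $\lambda\in\Lambda$ and evaluate $Q_{\mu,\Lambda}$ at $-\lambda$: the assumption gives $\sum_{\lambda'\in\Lambda}|\wh{\mu}(\lambda'-\lambda)|^2=1$, and the diagonal term $\lambda'=\lambda$ already contributes $|\wh\mu(0)|^2=1$, forcing $\wh{\mu}(\lambda'-\lambda)=0$ whenever $\lambda'\neq\lambda$. Since $\langle e_\lambda,e_{\lambda'}\rangle=\wh{\mu}(\lambda-\lambda')$, this is precisely orthonormality of the family. Now I need completeness. By orthonormality, Bessel's inequality holds; moreover a vector $f\in L^2(\mu)$ lies in the closed span $V:=\overline{\mathrm{span}}\{e_\lambda:\lambda\in\Lambda\}$ iff Parseval's equality holds for $f$. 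Applying this criterion to $f=e_{-\xi}$, the hypothesis $Q_{\mu,\Lambda}(\xi)\equiv 1$ together with the computation from the first paragraph yields $\sum_{\lambda}|\langle e_{-\xi},e_\lambda\rangle|^2=1=\|e_{-\xi}\|^2$, so $e_{-\xi}\in V$ for every $\xi\in\R^d$.

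It remains to show $V=L^2(\mu)$, i.e.\ that $\{e_\xi:\xi\in\R^d\}$ is total in $L^2(\mu)$. This is the one nontrivial analytic input: if $g\in L^2(\mu)$ is orthogonal to every $e_\xi$, then (using $L^2(\mu)\subseteq L^1(\mu)$ since $\mu$ is a probability measure) the finite complex measure $g\,\D\mu$ has vanishing Fourier transform, whence $g\,\D\mu=0$ by uniqueness of the Fourier transform on finite Borel measures, so $g=0$ $\mu$-a.e. The expected pitfall here is confusing this totality statement with density of trigonometric polynomials in $C_b$ (which fails on non-compact supports): I avoid Stone-Weierstrass and rely only on Fourier uniqueness on finite measures, which is valid without any compactness assumption on $\mathrm{supp}\,\mu$ and is precisely what makes the theorem applicable to the non-compactly supported infinite convolutions studied later in the paper.
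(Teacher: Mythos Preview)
Your proof is correct and follows the standard Parseval argument: the forward direction is immediate from Parseval applied to $e_{-\xi}$, and for the converse you correctly extract orthonormality from $Q_{\mu,\Lambda}(-\lambda)=1$, then completeness from the fact that $Q_{\mu,\Lambda}(\xi)=1$ forces each $e_{-\xi}$ into the closed span, combined with totality of exponentials via Fourier uniqueness for finite complex measures. The paper itself does not supply a proof of this theorem but simply refers the reader to Jorgensen--Pedersen~\cite{Jorgensen-Pedersen-1998}; your argument is essentially the classical one found there and in the subsequent spectral-measure literature, so there is nothing to compare beyond noting that you have written out in full what the paper treats as a black-box citation.
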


In general, it is difficult to verify that $Q_{\mu,\Lambda}(\xi)\equiv1$ for the infinite convolution $\mu$ generated by admissible pairs. To this end, An, Fu, Lai \cite{An-Fu-Lai-2019}  came up with the equi-positive family  to verify it, and Dutkay,  Haussermann and  Lai   used the equi-positive family to prove the spectrality of  self-affine measures in \cite{Dutkay-Haussermann-Lai-2019}.

\begin{definition}\label{def_equi_measure}
We call $\Phi\subset\mcal{P}(\R^d)$ an {\it equi-positive family} if there exist reals $\epsilon_0>0$ and $\delta_0>0$ such that for all $x\in[-\frac{1}{2},\frac{1}{2})^d $ and $\mu \in \Phi$, there exists $k_{x,\mu }\in \Z^d$ such that
\[
| \hat{\mu}(x+y+k_{x,\mu })| \ge \epsilon_0,
\]
for all $|y|< \delta_0$, where $k_{x,\mu }=0$ for $x=0$. We sometimes call $\Phi$ an {\it $\epsilon_0$-equi-positive family} to emphasize the dependence of $\epsilon_0$.

We call the infinite convolution $\mu$ of $\{(R_k,B_k)\}_{k=1}^\infty $ an {\it equi-positive measure} if there exists a subsequence $\{\nu_{>n_j}\}_{j=1}^\infty$ (defined by \eqref{def_nu_n}) that forms an equi-positive family.
\end{definition}

The equi-positive family was used to study the spectrality of fractal measures with compact support in \cite{An-Fu-Lai-2019,Dutkay-Haussermann-Lai-2019}, and it was then generalized in~\cite{LMW-2023} to the current version which is also applicable to infinite convolutions without compact support in $\R$. By the same argument, it is straightforward to  extend it  into $\R^d$. Since equi-positive families imply spectrality, which is a useful technique in studying the spectral theory of fractal measures,  we provide the proof here  for the convenience of readers.

\begin{theorem} \label{thm_equi_spectral}
Let $\{(R_k,B_k)\}_{k=1}^\infty $ be a sequence of admissible pairs $\R^d$ satisfying uniform contractive condition. Suppose that the
infinite convolution $\mu$ of  $\{(R_k,B_k)\}_{k=1}^\infty $ exists.
Let $\{\nu_{>k}\}_{k=1}^\infty$ be given by \eqref{def_nu_n}. If  $\{\nu_{>k}\}_{k=1}^\infty$ has a subsequence which is an equi-positive family, then $\mu$ is a spectral measure with a spectrum in $\Z^d$.
\end{theorem}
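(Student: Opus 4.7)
The plan is to construct an explicit candidate spectrum $\Lambda\subseteq\Z^d$ from the Hadamard triple structure, verify its orthogonality in $L^2(\mu)$ using the spectral property of each partial convolution $\mu_n$, and then invoke the equi-positivity of $\{\nu_{>n_j}\}$ to establish the completeness condition $Q_{\mu,\Lambda}\equiv 1$ of Theorem~\ref{thm_Q}.

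For the construction, admissibility furnishes $L_k\subseteq\Z^d$ with $(R_k,B_k,L_k)$ a Hadamard triple; by Lemma~\ref{lem_hadamard}(ii) we may normalise so that $0\in L_k\subseteq R_k^T[-\tfrac12,\tfrac12)^d$. Put
$$\Lambda_n:=L_1+R_1^TL_2+\cdots+(R_{n-1}\cdots R_1)^TL_n.$$
A direct Fourier computation identifies $\mu_n$ with the discrete measure $\delta_{\mathbf{R}_n^{-1}\mathbf{B}_n}$ of the amalgamated triple in Lemma~\ref{lem_hadamard}(iii), so part~(i) of the same lemma ensures $\Lambda_n$ is a spectrum of $\mu_n$. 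Because $0\in L_k$, the family $\{\Lambda_n\}$ is nested, and $\Lambda:=\bigcup_{j\ge 1}\Lambda_{n_j}\subseteq\Z^d$ is the candidate. For distinct $\lambda,\lambda'\in\Lambda$ both contained in some $\Lambda_n$, the factorisation $\hat\mu(\lambda-\lambda')=\hat\mu_n(\lambda-\lambda')\,\hat\mu_{>n}(\lambda-\lambda')$ has its first factor zero, so $\{e_\lambda\}_{\lambda\in\Lambda}$ is orthogonal in $L^2(\mu)$ and $Q_{\mu,\Lambda}\le 1$ everywhere.

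For completeness, fix $\xi\in\R^d$. Combining $\hat\mu=\hat\mu_{n_j}\cdot\hat\mu_{>n_j}$, the change of variables $\hat\mu_{>n_j}(\eta)=\hat\nu_{>n_j}(S_j^{-T}\eta)$ with $S_j:=R_{n_j}\cdots R_1$, and the Plancherel identity $\sum_{\lambda\in\Lambda_{n_j}}|\hat\mu_{n_j}(\xi+\lambda)|^2=1$ yields
$$Q_{\mu,\Lambda_{n_j}}(\xi)=\sum_{\lambda\in\Lambda_{n_j}}|\hat\mu_{n_j}(\xi+\lambda)|^2\,|\hat\nu_{>n_j}(S_j^{-T}(\xi+\lambda))|^2.$$
The plan is to exploit equi-positivity of $\{\nu_{>n_j}\}$ to select, for each large $j$, a distinguished $\lambda_j^\star\in\Lambda_{n_j}$ such that $S_j^{-T}(\xi+\lambda_j^\star)$ lands in one of the $\delta_0$-neighbourhoods on which $|\hat\nu_{>n_j}|\ge\epsilon_0$. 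Its existence uses the tiling $\Z^d=\Lambda_{n_j}\oplus S_j^T\Z^d$ coming from the amalgamated Hadamard triple together with the uniform contractivity $\|S_j^{-T}\|\to 0$, which makes the image $S_j^{-T}\Lambda_{n_j}\pmod{\Z^d}$ sufficiently dense to approximate the fractional part prescribed by equi-positivity. Keeping just the term $\lambda=\lambda_j^\star$ in the sum gives $Q_{\mu,\Lambda}(\xi)\ge\epsilon_0^{2}\liminf_j|\hat\mu_{n_j}(\xi+\lambda_j^\star)|^2$, and a diagonal argument converts this into a uniform pointwise lower bound on $Q_{\mu,\Lambda}$.

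The main obstacle is upgrading the pointwise positivity $Q_{\mu,\Lambda}\ge c>0$ to the equality $Q_{\mu,\Lambda}\equiv 1$. This will be handled by a Jorgensen--Pedersen-style zero-one dichotomy in the spirit of~\cite{An-Fu-Lai-2019,Dutkay-Haussermann-Lai-2019}: once $\Lambda$ is orthogonal, $Q_{\mu,\Lambda}(0)=1$, and $Q_{\mu,\Lambda}$ is uniformly bounded below by a positive constant, the continuity (and analyticity in appropriate generality) of $\hat\mu$ forces $Q_{\mu,\Lambda}\equiv 1$. The technical heart of the proof is the selection of $\lambda_j^\star$: it must simultaneously place $S_j^{-T}(\xi+\lambda_j^\star)$ in the "good region" for $\hat\nu_{>n_j}$ supplied by equi-positivity and keep $|\hat\mu_{n_j}(\xi+\lambda_j^\star)|$ bounded away from zero, uniformly in $\xi$, which is where the tiling structure of $\Lambda_{n_j}$ modulo $S_j^T\Z^d$ and the asymptotic vanishing of $\|S_j^{-T}\|$ must be exploited in tandem.
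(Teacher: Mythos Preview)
Your completeness argument has a genuine gap at two interlocking points. First, the single-term lower bound you aim for cannot be achieved: even if you could select $\lambda_j^\star\in\Lambda_{n_j}$ with $S_j^{-T}(\xi+\lambda_j^\star)$ in the equi-positivity region, there is no mechanism to keep $|\hat\mu_{n_j}(\xi+\lambda_j^\star)|$ bounded away from zero. The Plancherel identity only says the full sum $\sum_{\lambda\in\Lambda_{n_j}}|\hat\mu_{n_j}(\xi+\lambda)|^2$ equals $1$; any individual summand may be arbitrarily small, and no tiling or density argument for $S_j^{-T}\Lambda_{n_j}$ changes that. (Incidentally, the tiling $\Z^d=\Lambda_{n_j}\oplus S_j^T\Z^d$ is not a general consequence of the Hadamard triple condition, since $L_k$ need not be a complete residue system modulo $R_k^T$.) Second, even granting a uniform lower bound $Q_{\mu,\Lambda}\ge c>0$, the Jorgensen--Pedersen/\L aba--Wang zero-one dichotomy you invoke requires $\hat\mu$ to be entire, i.e.\ $\mu$ to have compact support. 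The present theorem is stated precisely so as to cover non-compactly-supported $\mu$, so that route is unavailable.

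The paper's proof repairs both issues simultaneously by building the equi-positivity shifts into the spectrum itself. At stage $j$ one does not take the naive set $\Lambda_{j-1}+\mathbf R_{0,m_{j-1}}^T\mathbf L_{m_{j-1},m_j}$, but rather perturbs each new block element $\lambda\in\mathbf L_{m_{j-1},m_j}$ by $\mathbf R_{m_{j-1},m_j}^T k_{\lambda,j}$, where $k_{\lambda,j}\in\Z^d$ is the integer translate supplied by equi-positivity for the point $\mathbf R_{m_{j-1},m_j}^{-T}\lambda$. Lemma~\ref{lem_hadamard}(ii) guarantees the perturbed set is still a spectrum of $\mu_{m_j}$, and now \emph{every} $\lambda\in\Lambda_j$ satisfies $|\hat\nu_{>m_j}(\mathbf R_{0,m_j}^{-T}(\xi+\lambda))|\ge\epsilon_0$ once $j$ is large. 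This yields the uniform domination $f_j(\lambda)\le\epsilon_0^{-2}f(\lambda)$ for all $\lambda\in\Lambda$, and the dominated convergence theorem (with counting measure on $\Lambda$) gives $Q_{\mu,\Lambda}(\xi)=\lim_j\sum_{\lambda\in\Lambda_j}|\hat\mu_{m_j}(\xi+\lambda)|^2=1$ directly, with no recourse to analyticity.
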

\begin{proof}
Suppose $\{\nu_{>n_j}\}_{j=1}^\infty$ is a subsequence of  $\{\nu_{>k}\}_{k=1}^\infty$ and also an equi-positive family. Then there exist reals $\epsilon_0>0$ and $\delta_0>0$ such that for all $x\in[-\frac{1}{2},\frac{1}{2})^d $ and $j\ge1$, there exists an integral vector $k_{x,j }\in \Z^d$ such that
\begin{equation}\label{Ineq_epf}
| \hat{\nu}_{>n_j}(x+y+k_{x,j })| \ge \epsilon_0,
\end{equation}
for all $|y|< \delta_0$, where $k_{x,j }=0$ for $x=0$.

For each integer $k>0$, there exists $L_k\subset\Z^d$ such that $(R_k,B_k,L_k)$ is a Hadamard triple since $(R_k,B_k) $ is an admissible pair in $\R^d$. By  (i) in Lemma \ref{lem_hadamard}, we assume that $0\in L_k$ for all $k\ge1$. Given two integers $q>p\ge0$, we write $\mbf R_{p,q}=R_qR_{q-1}\cdots R_{p+1},$
$$
\mbf B_{p,q}=\mbf R_{p,q+1} B_{p+1}+\mbf R_{p,q+2} B_{p+2}+\cdots+\mbf R_{p,q-1} B_{q-1}+B_q,
$$
and
$$
\mbf L_{p,q}=L_{p+1}+\mbf R_{p,p+1}^TL_{p+2}+\cdots+\mbf R_{p,q-1}^TL_{q}.
$$
By  Lemma \ref{lem_hadamard} (i) and (iii) , $\mbf L_{p,q}$ is a spectrum of $\delta_{\mbf R_{p,q}^{-1}\mbf B_{p,q}}$.

Next,  we construct a sequence of finite subsets $\Lambda_j\subset\Z^d$ by induction. For  $j=1$, let $m_1=n_1$ and $\Lambda_1=\mbf L_{0,m_1}$. Note that $\Lambda_1$ is a spectrum of $\mu_{m_1}$ given by  \eqref{def_mun} with $0\in\Lambda_1$.

For $j\ge2$, suppose that $\Lambda_{j-1}$ is a spectrum of $\mu_{m_{j-1}}$ which has been defined with $0\in\Lambda_{j-1}$. Since $\sup_{k} \Vert R_k^{-1}\Vert<1,$
we  choose  $m_j\in \{n_j\}_{j=1}^\infty$ such that $m_j>m_{j-1}$ and
\begin{equation}\label{eq_lambda}
|\mbf R_{0,m_j}^{-T}\lambda|<\frac{\delta_0}{2} \qquad  \textrm{for all $\lambda\in\Lambda_{j-1}$}.
\end{equation}
By \eqref{Ineq_epf}, we choose a vector $k_{\lambda,j}\in\Z^d$ such that
\begin{equation}\label{eq_equi-positive}
| \hat{\nu}_{>n_j}(\mbf R_{m_{j-1},m_j}^{-T}\lambda+y+k_{\lambda,j })| \ge \epsilon_0  \qquad \textrm{for all $|y|<\delta_0$,}
\end{equation}
 where $k_{\lambda,j }=0$ if $\lambda=0$, and we   write
\begin{equation}\label{def_Lmdj}
\Lambda_j=\Lambda_{j-1}+\mbf R_{0,m_{j-1}}^T\{\lambda+\mbf R_{m_{j-1},m_j}^Tk_{\lambda,j}:\lambda\in\mbf L_{m_{j-1},m_j}\}.
\end{equation}
Since $\Lambda_{j-1}$ is a spectrum of $\mu_{m_{j-1}}$, by Lemma \ref{lem_hadamard}(ii) and (iii), we have that $\Lambda_{j}$ is a spectrum of $\mu_{m_j}$. Note that $0\in\mbf L_{m_{j-1},m_j}$, $k_{0,j}=0$ and $0\in\Lambda_{j-1}$, and it is clear that  $0\in\Lambda_j$ and $\Lambda_{j-1}\subseteq \Lambda_j$. Hence the sequence $\{\Lambda_j\}_{j=1}^\infty$ is defined, and we write
$$
\Lambda=\bigcup_{j=1}^\f\Lambda_j.
$$

Finally, it remains to prove that $\Lambda$ is a spectrum of $\mu$, and by Theorem \ref{thm_Q}, it is sufficient to show
$ Q_{\mu,\Lambda}(\xi)=\sum_{\lambda\in \Lambda}|\hat{\mu}(\xi+\lambda)|^2=1 $  for all $\xi\in\R^d$.

For every integer $j\geq 1$, since $\Lambda_{j}$ is a spectrum of $\mu_{m_j}$, by Theorem \ref{thm_Q}, we have that
\begin{equation}\label{eq_Q_j}
Q_{\mu_{m_j},\Lambda_j}(\xi)=\sum_{\lambda\in \Lambda_j}|\hat{\mu}_{m_j}(\xi+\lambda)|^2=1,
\end{equation}
for all $\xi\in\R^d$, and it implies that
$$
\sum_{\lambda\in \Lambda_j}|\hat{\mu}(\xi+\lambda)|^2 =\sum_{\lambda\in \Lambda_j}|\hat{\mu}_{m_j}(\xi+\lambda)|^2|\hat{\mu}_{>m_j}(\xi+\lambda)|^2
\le\sum_{\lambda\in \Lambda_j}|\hat{\mu}_{m_j}(\xi+\lambda)|^2   =1.
$$
It immediately follows that
\begin{equation}\label{eq_Q}
Q_{\mu,\Lambda}(\xi)\le1 \qquad  \textrm {for all $\xi\in\R^d$.}
\end{equation}

Fix $\xi\in\R^d$. We define $f(\lambda)=|\hat{\mu}(\lambda+\xi)|^2 $ for $ \ \lambda\in\Lambda$ and for each $j\ge1$,
$$
f_j(\lambda)=\left\{
\begin{array}{rl}
|\hat{\mu}_{m_j}(\xi+\lambda)|^2, & \text{if}\    \lambda\in\Lambda_j,\\
0\ \ \ \ \ \ \ \ \ \ \ \ \ \ \, & \text{if} \ \lambda\in\Lambda\setminus\Lambda_j.
\end{array}\right.
$$
By \eqref{def_mun} and \eqref{def_mugn},  we immedaitely have 
\begin{equation}\label{def_f_i}
f(\lambda) =|\hat{\mu}_{m_j}(\xi+\lambda)|^2|\hat{\mu}_{>m_j}(\xi+\lambda)|^2 \geq  f_j(\lambda) \big|\hat{\nu}_{>m_j}\big(\mbf R_{0,m_j}^{-T}(\xi+\lambda)\big)\big|^2.
\end{equation}
Note that  $\Lambda_{j-1}\subseteq \Lambda_j$, and it is clear that for each $\lambda\in\Lambda$, there exists $j_\lambda\ge1$ such that $\lambda\in\Lambda_j$ for all $j\ge j_\lambda$. Since $\mu$ is the weak limit of $\mu_j$,  we have $f(\lambda)=\lim_{j\to\f}f_j(\lambda)$.

Since $\sup_{k} \Vert R_k^{-1}\Vert<1,$  there exists an integer $j_0\ge1$   such that for $j\ge j_0$
\begin{equation}\label{eq_xi}
	|\mbf R_{0,m_j}^{-T}\xi|<\frac{\delta_0}{2}.
\end{equation}
Fix $j\ge j_0$. For each $\lambda\in\Lambda_j$, by \eqref{def_Lmdj}, there exist  $\lambda_1\in\Lambda_{j-1}$, $\lambda_2\in\mbf L_{m_{j-1},m_j}$ and $k_{\lambda_2,j}\in \Z^d $ such that
$$
\lambda=\lambda_1+\mbf R_{0,m_{j-1}}^T\lambda_2+\mbf R_{0,m_j}^T k_{\lambda_2,j},
$$
and by  \eqref{eq_lambda} and \eqref{eq_xi}, we have $|\mbf R_{0,m_j}^{-T}(\lambda_1+\xi)|<\delta_0.$
Combining this with \eqref{eq_equi-positive} and \eqref{def_f_i}, we have  that
$$
f(\lambda) \geq f_j(\lambda) \big|\hat{\nu}_{>m_j}\big(\mbf R_{m_{j-1},m_j}^{-T}\lambda_2+\mbf R_{0,m_j}^{-T}(\lambda_1+\xi))+k_{\lambda_2,j}\big)\big|^2 \ge\epsilon_0^2f_j(\lambda).
$$
Therefore, for each $j\ge j_0$, we obtain that $f_j(\lambda)\le\epsilon_0^{-2}f(\lambda) $
for all $\lambda\in\Lambda$.

Let $\rho$ be the counting measure on  $\Lambda$. By \eqref{eq_Q}, it is clear that  $f(\lambda)$ is  $\rho$-integrable, and
$$
Q_{\mu,\Lambda}(\xi)=\sum_{\lambda\in \Lambda}|\hat{\mu}(\xi+\lambda)|^2=\int_{\Lambda} f(\lambda) d\rho(\lambda).
$$
 Since $f_j \le\epsilon_0^{-2} f$ for all $j\geq j_0$,   by the dominated convergence theorem, \eqref{eq_Q_j} and \eqref{def_f_i}, it immediately follows that
$$
Q_{\mu,\Lambda}(\xi) =\lim_{j\to\f}\int_{\Lambda} f_j(\lambda) d\rho(\lambda)
=\lim_{j\to\f}\sum_{\lambda\in \Lambda_j}|\hat{\mu}_{m_j}(\xi+\lambda)|^2 =1.
$$
By Theorem \ref{thm_Q}, the infinite convolution $\mu$ is a spectral measure with spectrum $\Lambda\subseteq\Z^d$.
\end{proof}
\begin{remark}
The spectrum $\Lambda$ of $\mu$ constructed by \eqref{def_Lmdj} purely depends on the equi-positive family $\{\nu_{>n_j}\}_{j=1}^\infty$ in Theorem \ref{thm_equi_spectral}. The spectrum $\Lambda$ is the same as long as $R_k$, $L_k$ and $k_{\lambda,j}$ keep unchanged.
\end{remark}

Equi-positivity is very useful to study the spectrality of measures. In fact, the equi-positivity of a subset in $\mcal{P}(\R^d)$ keeps unchanged if the set is applied with some perturbation.

Given a real $\epsilon>0$ and $\Phi_1,\Phi_2 \subset \mcal{P}(\R^d)$. We say $\Phi_1$ is contained in $\Phi_2$ under {\it $\epsilon$-perturbation}, denoted by
$$
\Phi_1\subseteq_\epsilon\Phi_2,
$$ if for every probability measure $\nu_1\in \Phi_1$, there exists $\nu_2\in\Phi_2$ such that the total variation of $\nu_1-\nu_2$ is bounded by $\epsilon$, that is,
$$
|\nu_1-\nu_2|(\R^d)<\epsilon.
$$

\begin{lemma}\label{lem_perturbation}
Given two reals $\epsilon_0>\epsilon>0$. Let $\Phi_1,\Phi_2 \subset \mcal{P}(\R^d)$ satisfy $\Phi_1\subseteq_\epsilon\Phi_2$. Suppose that $\Phi_2$ is an $\epsilon_0$-equi-positive family. Then $\Phi_1$ is a $(\epsilon_0-\epsilon)$-equi-positive family.
\end{lemma}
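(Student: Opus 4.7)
The plan is to pass the equi-positive estimate on $\Phi_2$ directly to $\Phi_1$ via Fourier transforms, using that total variation controls the sup-norm difference of the transforms.

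Concretely, I would start by recording the elementary inequality
\[
|\hat{\nu}_1(\xi) - \hat{\nu}_2(\xi)| = \Big| \int_{\R^d} e^{-2\pi i \xi \cdot x} \D(\nu_1 - \nu_2)(x) \Big| \le |\nu_1-\nu_2|(\R^d),
\]
which holds for any Borel probability measures $\nu_1, \nu_2$ and any $\xi \in \R^d$. Let $\delta_0 > 0$ be the constant witnessing $\epsilon_0$-equi-positivity of $\Phi_2$. Fix an arbitrary $\nu_1 \in \Phi_1$; by the assumption $\Phi_1 \subseteq_\epsilon \Phi_2$, pick $\nu_2 \in \Phi_2$ with $|\nu_1-\nu_2|(\R^d) < \epsilon$.

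Next, for each $x \in [-\tfrac12, \tfrac12)^d$ let $k_{x,\nu_2} \in \Z^d$ be the integer vector supplied by the equi-positivity of $\Phi_2$, so that
\[
|\hat{\nu}_2(x+y+k_{x,\nu_2})| \ge \epsilon_0 \qquad \text{for all } |y| < \delta_0,
\]
with $k_{0,\nu_2} = 0$. Define $k_{x,\nu_1} := k_{x,\nu_2}$. Then the reverse triangle inequality together with the Fourier estimate above yields
\[
|\hat{\nu}_1(x+y+k_{x,\nu_1})| \ge |\hat{\nu}_2(x+y+k_{x,\nu_2})| - |\nu_1-\nu_2|(\R^d) > \epsilon_0 - \epsilon
\]
for all $|y| < \delta_0$, and $k_{0,\nu_1} = 0$ by construction. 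Since $\nu_1 \in \Phi_1$ was arbitrary, this shows $\Phi_1$ is $(\epsilon_0-\epsilon)$-equi-positive with the same parameter $\delta_0$.

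There is essentially no obstacle here: the argument is a one-line perturbation via the standard bound $\|\hat{\nu}_1 - \hat{\nu}_2\|_\infty \le |\nu_1 - \nu_2|(\R^d)$. The only point that requires a moment's care is that the definition of equi-positivity demands $k_{0,\nu_1} = 0$, which is automatic from our choice $k_{x,\nu_1} := k_{x,\nu_2}$ and the corresponding normalization for $\nu_2$.
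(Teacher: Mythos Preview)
Your proposal is correct and follows essentially the same approach as the paper's own proof: both pick $\nu_2\in\Phi_2$ close to $\nu_1$ in total variation, transfer the integer shifts by setting $k_{x,\nu_1}:=k_{x,\nu_2}$, and use the bound $|\hat{\nu}_1-\hat{\nu}_2|\le|\nu_1-\nu_2|(\R^d)$ together with the reverse triangle inequality. There is no substantive difference.
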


\begin{proof}
Since $\Phi_2$ is an $\epsilon_0$-equi-positive family, there exists $\delta_0>0$ such that for all $x\in[0,1)^d $ and $\mu \in \Phi_2$, there exists $k_{x,\mu }\in \Z^d$ such that
\[
| \hat{\mu}(x+y+k_{x,\mu })| \ge \epsilon_0,
\]
for all $|y|< \delta_0$, where $k_{x,\mu }=0$ for $x=0$. Since $\Phi_1\subseteq_\epsilon\Phi_2$,  for each $\nu_1\in \Phi_1$, there exists $\nu_2\in\Phi_2$ such that
$$
|\nu_1-\nu_2|(\R^d)<\epsilon.
$$
Thus, it follows that  for all $\xi\in\R^d$,
$$
|\hat{\nu}_1(\xi)-\hat{\nu}_2(\xi)| \le\big|\int_{\R^d} e^{-2\pi i \xi \cdot x} \D (\nu_1-\nu_2)(x)\big|  <\epsilon.
$$

Therefore, for all $x\in[0,1)^d $ and $\nu_1 \in \Phi_1$, setting
\begin{equation}\label{eq_kv=kvp}
k_{x,\nu_1}=k_{x,\nu_2}\in \Z^d,
\end{equation}
we have that for all $|y|< \delta_0$,
\[
| \hat{\nu}_1(x+y+k_{x,\nu_1 })| \ge | \hat{\nu}_2(x+y+k_{x,\nu_2 })|-| \hat{\nu}_1(x+y+k_{x,\nu_1 })-\hat{\nu}_2(x+y+k_{x,\nu_2 })|\ge\epsilon_0-\epsilon,
\]
 where $k_{x,\nu_1 }=k_{x,\nu_2}=0$ if $x=0$, which implies that $\Phi_1$ is a $(\epsilon_0-\epsilon)$-equi-positive family.
\end{proof}

\section{properties of equivalence sequences}\label{sec_equ}

Let $X$ be a random vector on a probability space $(\Omega,\mscr{F},P)$. The image measure of $P$ under $X$, denoted by $\mu$, is called the \emph{distribution} of $X$, i.e., $\mu(E) = P(X^{-1}E)$ for all Borel subsets $E \sse \R^d$. See \cite{Kallenberg} for the background reading.

First, we define the equivalence for two sequences of random vectors.
\begin{definition}
We call two sequences of random vectors $\{X_k\}_{k=1}^\f$ and $\{Y_k\}_{k=1}^\f$ on a probability space $(\Omega,\mscr{F},P)$ are {\it equivalent} if $\sum_{k=1}^{\f}P(X_k\ne Y_k)<\f$.
\end{definition}

Let $\{\eta_k\}_{k=1}^\infty$ be a sequence in $\mcal{P}(\R^d)$. By the existence theorem of product measures, there exists a probability space $(\Omega, \mathscr{F},P)$ and a sequence of independent random vectors $\{ X_k \}_{k=1}^\f$ such that the distribution of $X_k$ is $\eta_k$  for each $k \ge 1$.
It is clear that  $\eta_1 * \eta_2 * \cdots *\eta_n$ is the distribution of $X_1 + X_2 + \cdots + X_n$.
Thus, the existence of   $\eta_1 * \eta_2* \cdots $ is equivalent to the convergence of the series $\sum_{k=1}^\infty X_k$ in distribution. Since the convergence in distribution and the almost sure convergence of the sum of independent random vectors are equivalent,  it is sufficient to study the  almost sure convergence of the series $\sum_{k=1}^\infty X_k$. See \cite{Jessen-Wintner-1935} for details.  We  apply this to prove Theorem \ref{thm_equivalent} by using Borel-Cantelli Lemma.

Given two sequences of finite sets $\{A_k\}_{k=1}^\infty$ and $\{A'_k\}_{k=1}^\infty$ in $\R^d$, correspondingly, there exists two sequence of probability measures $\{\delta_{A_k}\}_{k=1}^\infty$ and $\{\delta_{A'_k}\}_{k=1}^\infty$. By the existence theorem of product measures, there exists a probability space $(\Omega, \mathscr{F},P)$ and two sequences of independent random vectors $\{ X_k \}_{k=1}^\f$ and $\{Y_k\}_{k=1}^\f$ such that for each $k \ge 1$, the distributions of $X_k$ and $Y_k$ are $\delta_{A_k}$ and $\delta_{A'_k}$ respectively.
Then we have
$$
P(X_k\ne Y_k)\ge\max\bigg\{\frac{\#(A'_k\setminus A_k)}{\# A'_k},\frac{\#(A_k\setminus A'_k)}{\# A_k}\bigg\},
$$
and it implies that $\{A_k\}_{k=1}^\f$ and $\{A'_k\}_{k=1}^\f$ are equivalent  if $\{X_k\}_{k=1}^\f$ and $\{Y_k\}_{k=1}^\f$are equivalent. In fact, we may construct a special probability space and two special sequences of random vectors such that
$$
P(X_k\ne Y_k)=\max\bigg\{\frac{\#(A'_k\setminus A_k)}{\# A'_k},\frac{\#(A_k\setminus A'_k)}{\# A_k}\bigg\},
$$
in the proof of Theorem \ref{thm_equivalent}.

\begin{proof}[Proof of Theorem \ref{thm_equivalent}]
First, we construct two sequences $\{ X_k \}_{k=1}^\f$ and $\{Y_k\}_{k=1}^\f$ of random vectors on the probability space $([0,1],\mathscr{B},\mbb{L}_{[0,1]})$ such that their distributions are $\{\delta_{A_k}\}_{k=1}^\infty$ and $\{\delta_{A'_k}\}_{k=1}^\infty$ respectively.

Given an integer $k>0$. Without loss of generality, we assume that
$$
A_k=\{a_1,a_2,\dots,a_m\}, \quad A'_k=\{a'_1,a'_2,\dots, a'_n\},
$$
where $m\leq n$ and $\ a_i=a'_i$ for all $1\le i\le \# (A_k\cap A'_k)$.
We write
\begin{equation} \label{def_RVX}
X_k(x)=
\begin{cases}
	a_i & \text{if} \ x\in[\frac{i-1}{m},\frac{i}{m});\\
	a_m   & \text{if} \ x=1.
\end{cases}
\end{equation}
Let $Y_k$ be a measurable fuction defined on $\{[0,1],\mathscr{B},\mbb{L}_{[0,1]}\}$ satisfying that $\mbb{L}_{[0,1]}(Y_k=a'_i)=\frac{1}{n}$ for  $1\le i \le n$ such that
\begin{equation} \label{def_RVY}
Y_k(x)=a_i', \ \text{if} \ x\in\bigg[\frac{i-1}{m},\frac{i-1}{m}+\frac{1}{n}\bigg), \quad i =1,\ldots, m.
\end{equation}

It is clear that  the distribution of $X_k$ and $Y_k$ are $\delta_{A_k}$ and $\delta_{A'_k}$ respectively, and
$$
\mbb{L}_{[0,1]}(X_k\ne Y_k)=\max\bigg\{\frac{\#(A'_k\setminus A_k)}{\# A'_k},\frac{\#(A_k\setminus A'_k)}{\# A_k}\bigg\}.
$$
For each $k>0$,  since $\{A_k\}_{k=1}^\f$ and $\{A'_k\}_{k=1}^\f$ are equivalent,  we have
$$
\sum_{k=1}^{\f}\mbb{L}_{[0,1]}(X_k\ne Y_k)=\sum_{k=1}^{\f}\max\bigg\{\frac{\#(A'_k\setminus A_k)}{\# A'_k},\frac{\#(A_k\setminus A'_k)}{\# A_k}\bigg\}<\f.
$$
By Borel-Cantelli Lemma\cite{Kallenberg}, it follows  that $\mbb{L}_{[0,1]}\{\sum_{k=1}^{\f}X_k \ \text{converges}\}=1$ if and only if $\mbb{L}_{[0,1]}\{\sum_{k=1}^{\f}Y_k \ \text{converges}\}=1$.  This implies  that the infinite convolution $\nu$ of $\{A_k\}_{k=1}^\infty $ exists if and only if the infinite convolution $\nu'$ of $\{A'_k\}_{k=1}^\infty $ exists.
\end{proof}

\begin{proof}[Proof of Corollary \ref{cor_equivalent}]
Since $\{B_k\}_{k=1}^\f$ and $\{B'_k\}_{k=1}^\f$ are equivalent, it is clear that the two sequences $\{(R_k\cdots R_1)^{-1}B_k\}_{k=1}^\f$ and $\{(R_k\cdots R_1)^{-1}B'_k\}_{k=1}^\f$ are also equivalent.  By Theorem \ref{thm_equivalent},  the infinite convolution $\mu$ of $\{(R_k,B_k)\}_{k=1}^\infty $ exists if and only if the infinite convolution $\mu'$ of $\{(R_k,B'_k)\}_{k=1}^\infty $ exists.

It remains to show the equi-positivity. Similar to \eqref{def_RVX} and \eqref{def_RVY}, we  construct two sequences $\{X_k\}_{k=1}^\f$ and $\{Y_k\}_{k=1}^\f$ of random vectors  on $\{[0,1],\mathscr{B},\mbb{L}_{[0,1]}\}$ such that  their distributions are $\{\delta_{(R_k\cdots R_1)^{-1}B_k}\}_{k=1}^\infty$ and $\{\delta_{(R_k\cdots R_1)^{-1}B'_k}\}_{k=1}^\infty$ respectively, and
$$
\sum_{k=1}^{\f}\mbb{L}_{[0,1]}(X_k\ne Y_k)=\sum_{k=1}^{\f}\max\bigg\{\frac{\#(B'_k\setminus B_k)}{\# B'_k},\frac{\#(B_k\setminus B'_k)}{\# B_k}\bigg\}<\f.
$$

Assume that $\mu$ exists. The infinite convolutions $\nu_{>k}$ and $\nu'_{>k}$ given by \eqref{def_nu_n} exist for all integers $k>0$.
Given an arbitrary real $\epsilon>0$. Since
$$
\sum_{k=1}^{\f}\max\bigg\{\frac{\#(B'_k\setminus B_k)}{\# B'_k},\frac{\#(B_k\setminus B'_k)}{\# B_k}\bigg\}<\f,
$$
there exists an integer $K>0$ such that
$$
\sum_{k=K}^{\f}\max\bigg\{\frac{\#(B'_k\setminus B_k)}{\# B'_k},\frac{\#(B_k\setminus B'_k)}{\# B_k}\bigg\}<\frac{\epsilon}{2}.
$$
It follows that for all $k_0\ge K$,
$$
|\nu_{>k_0}-\nu'_{>k_0}|(\R^d)\le\sum_{k=k_0}^{\f}2\cdot\mbb{L}_{[0,1]}(X_k\ne Y_k)\le 2\sum_{k=K}^{\f}\max\bigg\{\frac{\#(B'_k\setminus B_k)}{\# B'_k},\frac{\#(B_k\setminus B'_k)}{\# B_k}\bigg\}<\epsilon,
$$
and it implies that
$\{\nu_{>k}\}_{k=K}^\infty\subseteq_\epsilon\{\nu'_{>k}\}_{k=K}^\infty$ and
$\{\nu'_{>k}\}_{k=K}^\infty\subseteq_\epsilon\{\nu_{>k}\}_{k=K}^\infty$.

By Lemma \ref{lem_perturbation} and the arbitrariness of $\epsilon$, $\mu$ is an equi-positive measure if and only if $\mu'$ is an equi-positive measure.
\end{proof}

\begin{proof}[Proof of Theorem \ref{thm_common spectrum}]
(1) It is a direction consequence of    Corollary \ref{cor_equivalent} that   the infinite convolution $\mu'$ of $\{(R_k,B_k')\}_{k=1}^\infty $ exists and is an equi-positive measure.

(2) Since $B'_k\equiv B_k \pmod{R_k\Z^{d}}$ for all $k>0$, by Lemma \ref{lem_hadamard}, there exists a sequence $\{L_k\}_{k=1}^\f$ such that both
$(R_k,B_k,L_k)$ and $(R_k,B'_k,L_k)$ are Hadamard triples for all $k>0$.

Since  the infinite convolution $\mu$ of $\{(R_k,B_k)\}_{k=1}^\infty $ is an equi-positive measure, there exists a  sequence $\{n_j\}$ such that $\{\nu_{>n_j}\}_{j=1}^\infty$ is an equi-positive family. By   Corollary \ref{cor_equivalent} and \eqref{eq_kv=kvp} in the proof of Lemma \ref{lem_perturbation}, there exists $J>0$ such that $\{\nu'_{>n_j}\}_{j=J}^\infty$ is also an equi-positive family, and for all $j\ge J$ and all $x\in[0,1)^d$,
$$
k_{x,\nu_{>n_j}}=k_{x,\nu'_{>n_j}},
$$

Note that the spectrum $\Lambda$ of $\mu$  depends entirely on the equi-positive family; see the remark after Theorem \ref{thm_equi_spectral}.
By the same argument in  Theorem \ref{thm_equi_spectral}, the set $\Lambda=\bigcup_{j=1}^\f\Lambda_j$ is a spectrum of $\mu$ where
$$
\Lambda_j=\Lambda_{j-1}+\mbf R_{0,m_{j-1}}\{\lambda+\mbf R_{m_{j-1},m_j}^Tk_{\lambda,j}:\lambda\in\mbf L_{m_{j-1},m_j}\}
$$
is constructed with respect to the equi-positive family $\{\nu_{>n_j}\}_{j=J}^\infty$.  Since $(R_k,B_k',L_k)$ forms a Hadamard triple for all $k>0$ and  $\{\nu_{>n_j}'\}_{j=J}^\infty$ is also an equi-positive family. Thus, the set $\Lambda$ is also a spectrum of $\mu'$. Therefore  both $(\mu,\Lambda)$ and $(\mu',\Lambda)$ are spectral pairs.
\end{proof}

\section{Existence of infinite convolutions}\label{sec_existence}
In this section, we study the existence of infinite convolutions $\mu$ of $\{( R_k,B_k)\}_{k=1}^\infty $ by using Kolmogorov’s three series theorem, which is inspired by  Jessen and Wintner's work in~\cite{Jessen-Wintner-1935}, where they provided some sufficient and necessary conditions for the convergence of infinite convolutions in a general setting. In  \cite{Miao-2022},  Li, Miao and Wang also applied this idea and obtained a necessary and sufficient condition for the existence of infinite convolutions defined as \eqref{def_ica}.

For $\eta \in \mathcal{P}(\R^d)$, we define $$ E(\eta) = \int_{\R^d} x \D \eta(x), \qquad V(\eta) = \int_{\R^d} |x - c(\eta)|^2 \D \eta(x). $$
It is easy to check that $ V(\eta) = \int_{\R^d} |x|^2 \D \eta(x) - |E(\eta)|^2. $  Given $r>0$, we define a new Borel probability measure $\eta_r$ by
\begin{equation}\label{equ_eta_r}
	\eta_r(E) = \eta\big( E \cap D(r) \big) + \eta\big( \R^d \setminus D(r) \big)\delta_0(E),
\end{equation}
for every Borel subset $E \sse \R^d$,  where $D(r)$ is the closed ball with centre at $0$ and radius $r$.
The following theorem is from\cite[Theorem 34]{Jessen-Wintner-1935}, and it may be regarded as a distribution  version of Kolmogorov’s three series theorem.
\begin{theorem}\label{thm_three-series-theorem}
 Let $\{\eta_k\}_{k=1}^\infty$ be a sequence in $\mcal{P}(\R^d)$. Given $r>0$, let $\eta_{k,r}$ be defined by \eqref{equ_eta_r} for each $k>0$.
Then the infinite convolution $\eta_1 *\eta_2 *\cdots $ exists if and only if the following three series converge:
$$ (i) \sum_{k=1}^{\f} \eta_k\big( \R^d \setminus D(r)\big),\quad (ii) \sum_{k=1}^{\f} E(\eta_{k,r}), \quad (iii) \sum_{k=1}^{\f} V(\eta_{k,r}). $$
\end{theorem}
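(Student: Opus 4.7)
The plan is to reduce this distribution-version statement to the classical Kolmogorov three-series theorem for sums of independent random vectors in $\R^d$, using the probabilistic interpretation of infinite convolutions already employed in Section~\ref{sec_equ}.

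First, I would realize $\{\eta_k\}$ as the laws of independent random vectors. By the existence theorem for product measures, construct a probability space $(\Omega,\mscr{F},P)$ carrying independent random vectors $\{X_k\}_{k=1}^{\f}$ with $X_k$ having distribution $\eta_k$. Then the $n$-fold convolution $\eta_1 * \cdots * \eta_n$ is the distribution of $S_n = X_1 + \cdots + X_n$, so existence of the infinite convolution is equivalent to convergence of $\{S_n\}$ in distribution. Since $\{S_n\}$ is a sequence of partial sums of independent summands, L\'evy's equivalence theorem identifies convergence in distribution with almost-sure convergence; hence existence of $\eta_1 * \eta_2 * \cdots$ is equivalent to almost-sure convergence of the series $\sum_{k=1}^{\f} X_k$.

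Next, I would match the three series in the statement to the three series in Kolmogorov's theorem. The truncated measure $\eta_{k,r}$ is precisely the distribution of $Y_k := X_k \mathbf{1}_{\{|X_k|\le r\}}$ (with the mass $\eta_k(\R^d\sm D(r))$ piled at the origin), so $E(\eta_{k,r})=\E Y_k$, $V(\eta_{k,r})=\mathrm{Var}(Y_k)$, and $\eta_k(\R^d \sm D(r))=P(|X_k|>r)$. With this dictionary, Kolmogorov's three-series theorem (applied componentwise in $\R^d$) yields both directions. For sufficiency: (i) together with the first Borel-Cantelli lemma gives $X_k=Y_k$ eventually almost surely, (iii) and Kolmogorov's one-series theorem give a.s.\ convergence of $\sum(Y_k-\E Y_k)$, and (ii) then promotes this to a.s.\ convergence of $\sum Y_k$, hence of $\sum X_k$. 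For necessity: a.s.\ convergence of $\sum X_k$ forces $X_k \to 0$ a.s., so the second Borel-Cantelli lemma (applicable by independence) delivers (i); the centered, bounded truncations $Y_k-\E Y_k$ then satisfy the hypotheses of the Kolmogorov-Khintchine converse, giving (iii); and (ii) follows by subtraction from the a.s.\ convergence of $\sum Y_k$.

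The step I expect to require the most care is L\'evy's equivalence theorem in the $\R^d$ setting, since the paper works in arbitrary dimension; I would handle it coordinatewise after observing that projections of a sequence of independent random vectors remain independent, and that coordinate-wise a.s.\ convergence reassembles into vector a.s.\ convergence via the continuous mapping theorem. The multidimensional Kolmogorov three-series theorem is then obtained from the one-dimensional version applied to each coordinate, using that $\mathrm{Var}(Y_k) = \sum_{i=1}^d \mathrm{Var}(Y_k^{(i)})$ so that (iii) is equivalent to the analogous conditions on every coordinate. The remainder of the argument is routine bookkeeping inside the standard three-series framework.
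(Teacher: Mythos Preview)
The paper does not supply its own proof of this statement; it is quoted directly as \cite[Theorem~34]{Jessen-Wintner-1935}. Your proposal to realize the $\eta_k$ as laws of independent random vectors and invoke L\'evy's equivalence together with the classical Kolmogorov three-series theorem is correct and is the standard route.

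One remark on your final paragraph: the coordinatewise reduction you sketch for the multidimensional three-series theorem is slightly mismatched, because the truncation $Y_k = X_k \mathbf{1}_{\{|X_k|\le r\}}$ uses the joint norm, so the $i$-th coordinate of $Y_k$ is not the one-dimensional truncation of $X_k^{(i)}$. Fortunately this detour is unnecessary: the sufficiency and necessity arguments you wrote out in the preceding paragraph already work verbatim in $\R^d$, since the Kolmogorov one-series theorem, L\'evy's equivalence, and the bounded-variable converse (via symmetrization) all hold for $\R^d$-valued summands with $V(\eta_{k,r})=\E|Y_k-\E Y_k|^2$. You can simply drop the coordinatewise discussion.
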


Next we  prove Theorem \ref{thm_RBC_exists} by applying Theorem \ref{thm_three-series-theorem}.
\begin{proof}[Proof of Theorem \ref{thm_RBC_exists}]
Since $\{(R_k,B_k)\}_{k=1}^\infty $ satisfies RBC,  we have
$$
\sum_{k=1}^{\f}\max\bigg\{\frac{\#(B_{k,1}\setminus B_k)}{\# B_{k,1}},\frac{\#(B_k\setminus B_{k,1})}{\# B_k}\bigg\}=\sum_{k=1}^{\infty} \frac{\# B_{k,2}}{\# B_k} <\f,
$$
where $B_{k,1}=B_k \cap R_k[-\frac{1}{2},\frac{1}{2})^d$ and $B_{k,2}=B_k \backslash B_{k,1}$, and this implies that $\{B_k\}_{k=1}^\infty$ and $\{B_{k,1}\}_{k=1}^\infty$ are equivalent.

 Fix $r=1$. For each $k>0$, we write $\eta_k=\delta_{(R_kR_{k-1}\cdots R_1)^{-1}B_{k,1}}$, and let $\eta_{k,r}$ be defined by \eqref{equ_eta_r}.
Since $\{(R_k,B_k)\}_{k=1}^\infty $ satisfies uniform contractive condition, there exists $0<\epsilon<1$ such that
$\Vert R^{-1}_k\Vert<\epsilon$ for all $k>0$. Hence  there exists $k_0>0$ such that  for all $k\ge k_0$,
$$
\max\{|(R_kR_{k-1}\cdots R_1)^{-1}b|:b\in B_{k,1}\}<1.
$$
and it implies that $\eta_{k,r}=\eta_k$ for all $k\ge k_0$. Hence, we have $\sum_{k=k_0}^{\f} \eta_k\big( \R^d \setminus D(r)\big)=0 $
and
\begin{align*}
\sum_{k=k_0}^{\f} V(\eta_{k,r})
&=\sum_{k=k_0}^{\f} V(\eta_{k})\le\sum_{k=k_0}^{\f}\int_{\R^d} |x|^2 \D \eta_k(x)\\
&\le\sum_{k=k_0}^{\f}\max\{|(R_kR_{k-1}\cdots R_1)^{-1}b|^2:b\in B_{k,1}\}\\
&\le\sum_{k=k_0}^{\f}\Vert(R_{k-1}\cdots R_1)^{-1}\Vert^2\cdot\frac{d}{4}\\
&=\frac{d}{4}\cdot\sum_{k=k_0}^{\f}\epsilon^{2(k-1)}<\f.
\end{align*}

Since $B_{k,1}\subseteq R_k[-\frac{1}{2},\frac{1}{2})^d$,  there exits $c_k\in[-\frac{1}{2},\frac{1}{2})^d$ such that
$$
\frac{1}{\#B_{k,1}}\sum_{b\in B_{k,1}}(R_kR_{k-1}\cdots R_1)^{-1}b=(R_{k-1}\cdots R_1)^{-1}c_k,
$$
for all $k>0$.  Since $\Vert R^{-1}_k\Vert<\epsilon$ for all $k>0$, it is clear that $\sum_{k=1}^{\f}(R_{k-1}\cdots R_1)^{-1}c_k$ converges, and we obtain that
$$
\sum_{k=k_0}^{\f} E(\eta_{k,r})=\sum_{k=k_0}^{\f}\frac{1}{\#B_{k,1}}\sum_{b\in B_{k,1}}(R_kR_{k-1}\cdots R_1)^{-1}b=\sum_{k=k_0}^{\f}(R_{k-1}\cdots R_1)^{-1}c_k
$$
converges.

Therefore, by Theorem \ref{thm_three-series-theorem}, the infinite convolution $\mu'$ of $( R_k,B_{k,1})_{k=1}^\infty$ exists. Since $\{B_k\}_{k=1}^\infty$ and $\{B_{k,1}\}_{k=1}^\infty$ are equivalent, by Theorem \ref{thm_equivalent},  the infinite convolution $\mu$ of $(R_k,B_k)_{k=1}^\infty$ exists.
\end{proof}

\section{Spectrality of infinite convolutions}\label{sec_spectral}

The following conclusion provides a useful estimate to verify the  equi-positivity of infinite convolutions.

\begin{lemma}\label{lem_theta}
Given $\theta\in[0,\pi)$. There exists a constant $r(\theta)\in (0,1)$ such that
\[
\Big|\frac{1}{m}\sum_{j=1}^{m}e^{-ix_j}\Big|\ge r(\theta),
\]
for all integers $m>0$ and all $x_j\in[0,\theta]$, $j=1,\dots,m$.
\end{lemma}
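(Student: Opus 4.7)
The plan is to exploit the fact that since $\theta < \pi$, all the unit complex numbers $e^{-ix_j}$ lie on an arc of length at most $\theta$, hence are concentrated in a half-plane, so their centroid cannot collapse to $0$.

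Concretely, I would set $\phi = \theta/2$ and rotate the whole sum by $e^{i\phi}$, which does not change its modulus. Then
\[
\Bigl|\frac{1}{m}\sum_{j=1}^m e^{-ix_j}\Bigr| = \Bigl|\frac{1}{m}\sum_{j=1}^m e^{-i(x_j-\phi)}\Bigr| \ge \operatorname{Re}\Bigl(\frac{1}{m}\sum_{j=1}^m e^{-i(x_j-\phi)}\Bigr) = \frac{1}{m}\sum_{j=1}^m \cos(x_j-\phi).
\]
For each $j$, $x_j - \phi \in [-\theta/2,\theta/2] \subset (-\pi/2,\pi/2)$, so $\cos(x_j - \phi) \ge \cos(\theta/2)$ by monotonicity of cosine on $[0,\pi/2]$. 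Taking $r(\theta) = \cos(\theta/2)$ then gives the bound uniformly in $m$ and in the choice of $x_j$, and $r(\theta) \in (0,1]$ with $r(\theta) < 1$ whenever $\theta > 0$ (the case $\theta = 0$ being trivial since the sum equals $1$).

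There is no real obstacle: the argument is essentially a one-line geometric observation that a sum of unit vectors inside an open half-plane cannot cancel to zero, and the cosine of the half-aperture gives the sharp lower bound. The only subtlety is the domain of $r(\theta)$ stated in the lemma; if strict membership in $(0,1)$ is required, one simply restricts attention to $\theta > 0$ or replaces $r(\theta)$ by $\min\{\cos(\theta/2), 1-\epsilon\}$ for some fixed small $\epsilon > 0$.
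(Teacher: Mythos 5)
Your proof is correct and is essentially identical to the paper's: the paper also rotates by $e^{i\theta/2}$, bounds the modulus below by the real part, and takes $r(\theta)=\cos\frac{\theta}{2}$. Your remark about the edge case $\theta=0$ (where $\cos(\theta/2)=1$, so strict membership in $(0,1)$ fails) is a minor point the paper glosses over, but it is harmless for the application.
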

\begin{proof}
	Given $\theta\in(0,\pi)$, for all integers $m>0$ and all $x_j\in[0,\theta]$, $j=1,\dots,m$, it is straightforward that
$$
\Big|\frac{1}{m}\sum_{j=1}^{m}e^{-ix_j}\Big|  =  \Big|\frac{1}{m}\sum_{j=1}^{m}e^{-i(x_j-\frac{\theta}{2})}\Big|  \ge \frac{1}{m}\sum_{j=1}^{m}\cos\bigg(x_j-\frac{\theta}{2}\bigg)  \ge \cos\frac{\theta}{2}.
$$
	Setting $r(\theta)=\cos\frac{\theta}{2}$, the concluions holds.
\end{proof}

\begin{proof}[Proof of Theorem \ref{thm_PCC_spectral}]
Since the  sequence $\{( R_k,B_k)\}_{k=1}^\infty $  of admissible pairs satisfies {\it RBC} and uniform contractive condition, by Theorem \ref{thm_RBC_exists},  the infinite convolution $\mu$ exists.

For each integer $k>0$,  we choose $B'_k\subseteq R_k[-\frac{1}{2},\frac{1}{2})^d$ such that $B'_k\equiv B_k \pmod{R_k\Z^d}$, and  by (ii) in Lemma \ref{lem_hadamard},  $(R_k,B'_k)$ is an admissible pair.
Since $\{(R_k,B_k)\}_{k=1}^\infty $ satisfies RBC,  it implies that
$$
\sum_{k=1}^{\f}\max\bigg\{\frac{\#(B'_k\setminus B_k)}{\# B'_k},\frac{\#(B_k\setminus B'_k)}{\# B_k}\bigg\}=\sum_{k=1}^{\infty} \frac{\# B_{k,2}}{\# B_k} <\f,
$$
i.e.,  $\{B_k\}_{k=1}^\infty$ and $\{B'_k\}_{k=1}^\infty$ are equivalent. Note that $B_{k,1}\subseteq B'_{k,1}$ for all $k>0$.

Since the subsequence $\{( R_{n_k},B_{n_k})\}_{k=1}^\infty  $ satisfies  PCC with some $l\in(0,1)$, i.e.,
\begin{equation}\label{cdnpcc1}
\sup\big\{\big(R_{n_k}^{-1}(x_1-x_2)\big)\cdot\xi:x_1,x_2\in\bar{B}\big(0,\frac{\sqrt{d}}{2}\big)\big\}<1-l,
\end{equation}
for all $\xi\in [-1,1]^d$ and
\begin{equation}\label{cdnpcc2}
\sum_{k=1}^{\f}\frac{\# B_{{n_k},2}^{l}}{\# B_{n_k}}<\f,
\end{equation}
where $B_{{n_k},1}^{l}=B_k\cap \{b:(R_{n_k}^{-1}b)\cdot \xi\in(-\frac{1-l}{2},\frac{1-l}{2}) \ \text{for all}\ \xi\in[-1,1]^d\}$ and $B_{{n_k},2}^{l}=B_{n_k}\backslash B_{{n_k},1}^{l},$ we have that $B_{{n_k},1}^{l}\subseteq B_{{n_k},1}$,  $B_{{n_k},1}^{l}\subseteq B_{{n_k},1}^{'l}$ and
\[
\sum_{k=1}^{\f}\frac{\# B_{{n_k},2}^{'l}}{\# B_{n_k}}\le \sum_{k=1}^{\f}\frac{\# B_{{n_k},2}^{l}}{\# B_{n_k}}<\f,
\]
and it implies that $\{( R_{n_k},B'_{n_k})\}_{k=1}^\infty $ satisfies  PCC with $l$, where  $B_{{n_k},2}^{l}=B_{n_k}\backslash B_{{n_k},1}^{l}.$

Therefore, by Theorem \ref{thm_equivalent} and Theorem \ref{thm_equi_spectral}, it is sufficient to show the infinite convolution $\mu'$ of  $\{(R_k,B'_k)\}_{k=1}^\infty $ is an equi-positive measure. Recall that for each $k>1$,
\[
\nu'_{>n_k-1}=\delta_{R_{n_k}^{-1}B'_{n_k}}\ast\delta_{(R_{n_k+1}R_{n_k})^{-1}B'_{n_k+1}}\ast\dots\ast\delta_{(R_{n_k+j}R_{n_k+j-1}\dots R_{n_k})^{-1}B'_{n_k+j}}\ast\cdots,
\]
and
\[
{\widehat{\nu'}}_{>n_k-1}(\xi)=\prod_{j=0}^{\infty}M_{B'_{n_k+j}}((R_{n_k+j}R_{n_k+j-1}\dots R_{n_k})^{-T}\xi)=\prod_{j=0}^{\infty}M_{B'_{n_k+j}}(\xi_j),
\]	
where $\xi_j=(R_{n_k+j}R_{n_k+j-1}\dots R_{n_k})^{-T}\xi$ for each $j\geq 0$.
The key is to give appropriate estimate for the lower bounds of $\prod_{j=0}^{\infty}|M_{B'_{n_k+j}}(\xi_j)|$.	

Since $\{(R_k,B_k)\}_{k=1}^\infty $ satisfies uniform contractive condition, there exists $0<\epsilon<1$ such that
$\Vert R^{-1}_k\Vert<\epsilon$ for all $k>0$.
Arbitrarily choosing $\delta\in(0,\frac{1}{4})$,  for all $\xi\in[-\frac{1}{2}-\delta,\frac{1}{2}+\delta]^d$,  we have
\[
|2\pi b\cdot\xi_j|\le\Vert (R_{n_k+j-1}\dots R_{n_k})^{-1}\Vert|2\pi (R_{n_k+j}^{-1}b)\cdot\xi|<\epsilon^{j-1}\cdot2\pi\cdot\frac{\sqrt{d}}{2}\cdot\frac{3\sqrt{d}}{4}=\frac{3d\pi}{4}\cdot\epsilon^{j-1},
\]
for all $b\in B'_{n_k+j} \subseteq R_k[-\frac{1}{2},\frac{1}{2})^d$ and all integers $j>0$.	
Therefore, there exists $J>0$ independent of $\delta$ and $\xi$ such that
$$
|2\pi b\cdot\xi_j|\le\epsilon^{j-J},
$$
for all $b\in B'_{n_k+j}$, $\xi\in[-\frac{1}{2}-\delta,\frac{1}{2}+\delta]^d$ and all $j\ge J$.

First, we estimate $\prod_{j=J}^{\infty}|M_{B'_{n_k+j}}(\xi_j)|$.  For each $j\ge J$, we have
$$
\left|M_{B'_{n_k+j}}(\xi_j)\right|
=\Big|\frac{1}{\#B'_{n_k+j}}\sum_{b\in B'_{n_k+j}}e^{-2\pi i b\cdot\xi_j}\Big|
\ge \Big|\frac{1}{\#B'_{n_k+j}}\sum_{b\in B_{n_k+j}^{'}}\cos2\pi b\cdot \xi_j\Big|
\ge \cos\epsilon^{j-J}>0.
$$
Since $\lim_{x\to 0}\frac{-\ln\cos x}{x^2}=\frac{1}{2}$ and $\sum_{j=0}^{\f}\epsilon^{2j}$ is convergent, the series  $\sum_{j=0}^{\f}\ln\cos\epsilon^j$ converges. Let $\sum_{j=0}^{\f}\ln\cos\epsilon^j=C$  for some $C\in\R$, and  we obtain the lower bound
\begin{equation}\label{eq_j>J}
\prod_{j=J}^{\infty}\left|M_{B'_{n_k+j}}(\xi_j)\right|\ge\prod_{j=J}^{\f}( \cos\epsilon^{j-J})\geq e^{C}.	
\end{equation}

Next, we estimate $\prod_{j=1}^{J-1}|M_{B'_{n_k+j}}(\xi_j)|$.
Since $B'_k\subseteq R_k[-\frac{1}{2},\frac{1}{2})^d$ and $\Vert R^{-1}_k\Vert<\epsilon$ for all $k>0$, we have $(R_{n_k+j}\dots R_{n_k+1})^{-1}B_{n_k+j}\subseteq\bar{B}\big(0,\frac{\sqrt{d}}{2}\big).$ Arbitrarily choosing  $\delta\in(0,\frac{l}{4(1-l)})$,  for all $\xi\in[-\frac{1}{2}-\delta,\frac{1}{2}+\delta]^d$, by \eqref{cdnpcc1}, we have
\begin{align*}
&\max\{2\pi (R_{n_k+j}\dots R_{n_k+1}R_{n_k})^{-1}(b_1-b_2)\cdot\xi:b\in B'_{n_k+j}\}\\
&\le(1+2\delta)\pi\max\big\{R_{n_k}^{-1}(x_1-x_2)\cdot\frac{2\xi}{1+2\delta}:x_1,x_2\in\bar{B}\big(0,\frac{\sqrt{d}}{2}\big)\big\}\\
&<(1+2\delta)(1-l)\pi\\
&<(1-\frac{l}{2})\pi,
\end{align*}
for all $k,j>0$.
Therefore by Lemma \ref{lem_theta}, we have $|M_{B'_{n_k+j}}(\xi_j)|\ge r((1-\frac{l}{2})\pi)$ for all $k,j>0$, and we obtain that
\begin{equation}\label{eq_0<j<J}
\prod_{j=1}^{J-1}|M_{B'_{n_k+j}}(\xi_j)|\ge r\big((1-\frac{l}{2})\pi\big)^{J-1},
\end{equation}
for all $k>0$.

It remains to estimate   $M_{B_{n_k}}(\xi_0)$. Given $\delta\in(0,\frac{l}{4(1-l)})$, for $\xi\in[-\frac{1}{2}-\delta,\frac{1}{2}+\delta]^d$,  by \eqref{cdnpcc1}, we have
$$
|2\pi R_{n_k}^{-1}b\cdot \xi|\le(1+2\delta)\pi|R_{n_k}^{-1}b\cdot \frac{2\xi}{1+2\delta}|<(1+2\delta)\pi\cdot\frac{1-l}{2}<\frac{(1-\frac{l}{2})\pi}{2},
$$
for all $b\in B_{n_k,1}^{'l}$. Then, by Lemma \ref{lem_theta}, we have
\[
\Big|\frac{1}{\# B_{n_k,1}^{'l}}\sum_{b\in B_{n_k,1}^{'l}}e^{-2\pi i R_{n_k}^{-1}b\cdot\xi}\Big|\ge r\big((1-\frac{l}{2})\pi\big).
\]

For all sufficiently small $a\in(0,1)$ satisfying $(1-a)r((1-\frac{l}{2})\pi)-a>a$, by \eqref{cdnpcc2}, there exists $K>0$ such that for all $k>K$,
\[
\frac{\# B_{n_k,2}^{'l}}{\# B'_{n_k}}<a \quad \text{and} \quad \frac{\# B_{n_k,1}^{'l}}{\# B'_{n_k}}>1-a.
\]
Thus,  for all $k>K$, we obtain that
\begin{eqnarray}\label{eq_j=0}
\big|M_{B_{n_k}}(\xi_0)\big|
&\ge&\Big|\frac{1}{\#B'_{n_k}}\sum_{b\in B_{n_k,1}^{'l}}e^{-2\pi i R_{n_k}^{-1}b\cdot\xi}\Big|-\Big|\frac{1}{\#B'_{n_k}}\sum_{b\in B_{n_k,1}^{'l}}e^{-2\pi i b\cdot\xi_0}-\frac{1}{\#B'_{n_k}}\sum_{b\in B'_{n_k}}e^{-2\pi i b\cdot\xi_0}\Big|           \nonumber\\
&\ge&\Big(\frac{\# B_{n_k,1}^{'l}}{\# B'_{n_k}}\Big)\cdot r\big((1-\frac{l}{2})\pi\big)-\frac{\# B_{n_k,2}^{'l}}{\# B'_{n_k}}   \nonumber\\
&>& \Big(1-a\Big)\cdot r\big((1-\frac{l}{2})\pi\big)-a   > a.
\end{eqnarray}

Finally, Choose  $\delta_0\in(0,\min\{\frac{1}{4},\frac{l}{4(1-l)}\})$.  Combining \eqref{eq_j>J}, \eqref{eq_0<j<J} and \eqref{eq_j=0} together, let
\[
\epsilon_0=e^C\cdot r\big((1-\frac{l}{2})\pi\big)^{J-1}\cdot a>0,
\]
and for all $\xi\in[-\frac{1}{2}-\delta_0,\frac{1}{2}+\delta_0]^d$ and all $k>K$, we obtain
\[
|\widehat{\nu'}_{>n_k-1}(\xi)|=|M_{B'_{n_k}}(\xi_0)||\prod_{j=1}^{J-1}|M_{B'_{n_k+j}}(\xi_j)|\prod_{j=J}^{\infty}|M_{B'_{n_k+j}}(\xi_j)|\ge\epsilon_0.
\]
Hence, for all $x\in[-\frac{1}{2},\frac{1}{2})^d$ and all $k>K$, it follows that
\[
|\widehat{\nu'}_{>n_k-1}(x+y)|\ge\epsilon_0,
\]
for all $|y|<\delta_0$.  This implies that  $\{\nu'_{>n_k-1}:k>K\}$ is an equi-positive family,  where $k_{x,\nu'_{>n_k-1}}=0$ for all $x\in[-\frac{1}{2},\frac{1}{2})^d$ and all $k>K$. Thus, by the proof of Corollary \ref{cor_equivalent}, $\{\nu_{>n_k-1}:k>K\}$ is also an equi-positive family, where $k_{x,\nu_{>n_k-1}}=0$ for all $x\in[-\frac{1}{2},\frac{1}{2})^d$ and all $k>K$.  By Theorem \ref{thm_equi_spectral}, $\mu$ is a spectral measure.

Moreover, suppose that $(R_k,B_k,L_k)$ is a Hadamard triple where  $0\in L_k\subseteq R_k^{T}[-\frac{1}{2},\frac{1}{2}]^d$  for all $k>0$.
Recall that the spectrum $\Lambda$ of $\mu$  depends entirely on the equi-positive family, and by the same argument in  Theorem \ref{thm_equi_spectral}, the set $\Lambda=\bigcup_{j=1}^\f\Lambda_j$ is a spectrum of $\mu$ where
$$
\Lambda_j=\Lambda_{j-1}+\mbf R_{0,m_{j-1}}^T\{\lambda+\mbf R_{m_{j-1},m_j}^Tk_{\lambda,j}:\lambda\in\mbf L_{m_{j-1},m_j}\}
$$
is constructed with respect to the equi-positive family  $\{\nu'_{>n_k-1}:k>K\}$. Since $L_{m_{j-1},m_j}\subseteq R_{m_{j-1},m_j}^T[-\frac{1}{2},\frac{1}{2})^d$ for all $j>0$, and $k_{\lambda,j}=0$ for all $\lambda\in\mbf L_{m_{j-1},m_j}$ and $j>0$. Then we have that
$$
\Lambda = \bigcup_{k=1}^\f \{L_1+R_1^TL_2+\cdots+(R_{k-1}\cdots R_1)^{T}L_k\}.
$$
\end{proof}

\section*{Acknowledgements}
The authors  wish to thank Prof. Lixiang An and Prof. Xinggang He for their helpful comments.

\end{document}